\newtheorem{theorem}{Theorem}
\title{Morse flows  with fixed points on the boundary of 3-manifold}
\author{Svitlana Bilun, Alexandr Prishlyak and Andrii Prus}
\begin{document}
\maketitle

\begin{abstract}
The work is devoted to the study of topological properties, structure and classification of Morse flows with fixed points on the boundary of three-dimensional manifolds. We construct a complete topological invariant of a Morse flow, Pr-diagram, which is similar to the Heegaard diagram of a closed 3-manifold.
\end{abstract} 
\hspace{10pt}















\section*{Introduction}
On every closed manifold, a vector field always generates a flow. In the case of a compact manifold with a boundary, the vector field will generate a flow if and only if it touches the boundary at every point \cite{Loseva2016}.

A vector field $X$ on the manifold $M$ is called \textit{structurally stable} if $X$ has a neighborhood $U$ in the set of all vector fields on the manifold $M$  such that an arbitrary field $Y \in U$ is topologically equivalent to the field $X$.

On closed surfaces, structurally stable vector fields are Morse--Smale fields. For higher-dimensional manifolds, in addition to Morse--Smale vector fields, there are other structurally stable vector fields.
The structural stability of vector fields on closed manifolds was investigated in the works \cite{Peixoto59, Palis68, Palis1970}.

For manifolds with a boundary, an analogue of Morse--Smale fields was described in \cite{Percell_1973, Robinson_1980}.


In dimension 2 (closed manifolds), Morse flows (Morse--Smale flows without closed trajectories) have 3 types of fixed points: a source, a saddle and a sink.
If we consider Morse flows with fixed points on the boundary $\partial M$, then
four types of fixed points are possible on surfaces: a source, a sink (Fig. \ref{pic2b}) and two types of saddles (Fig. \ref{pic2nm})

\begin{figure}[ht]
\center{
\includegraphics[height=3.0cm]{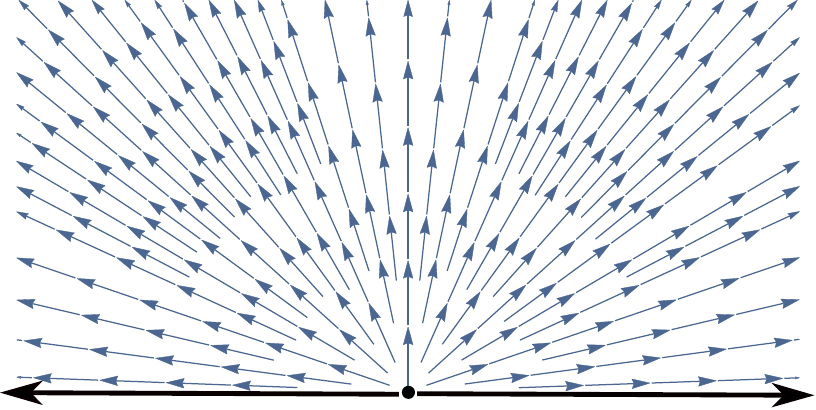} \ \
\includegraphics[height=3.0cm]{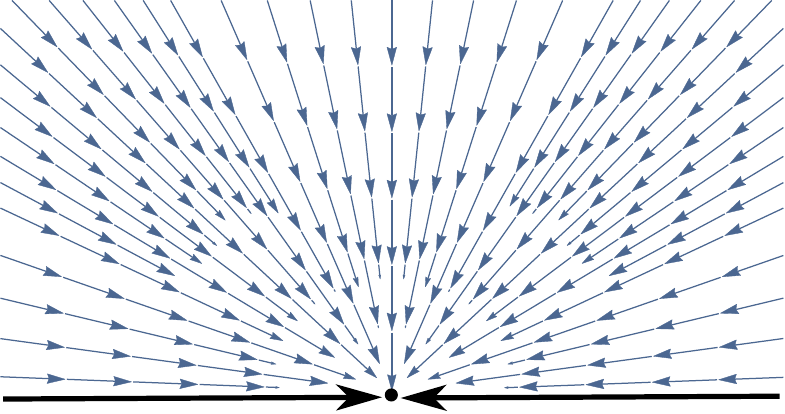}
}
\caption{Source and sink on the surface boundary}
   \label{pic2b}
\end{figure}


\begin{figure}[ht]
\center{
\includegraphics[height=3.0cm]{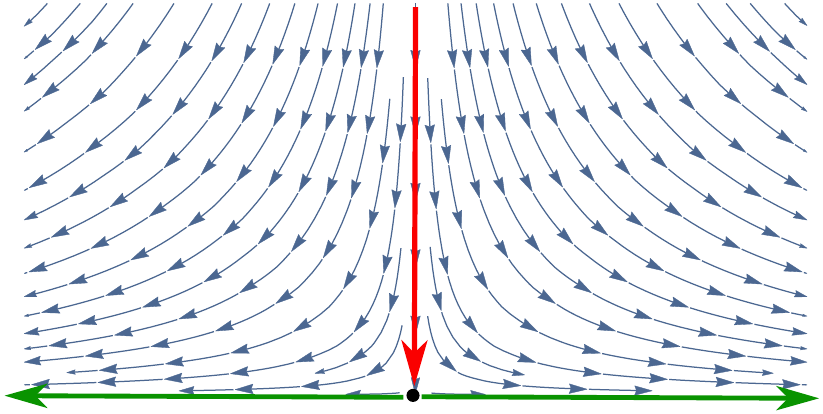} \ \
\includegraphics[height=3.0cm]{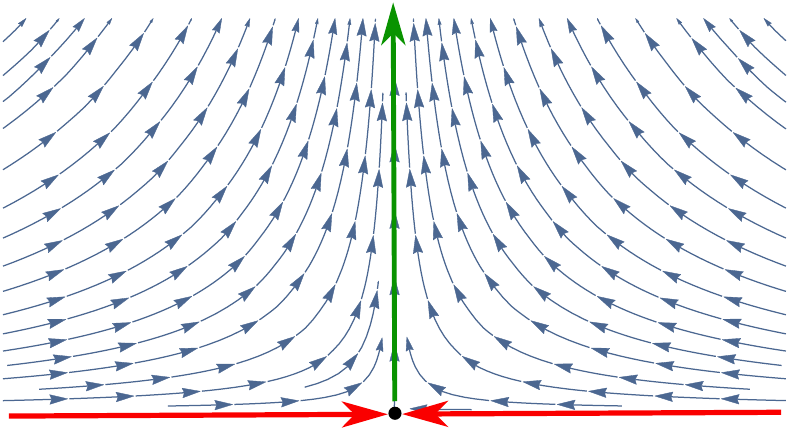}
}
\caption{Saddle point on the surface boundary}
\label{pic2nm}
\end{figure}

Many works have been devoted to the study of topological properties and the construction of a topological classification of Morse--Smale flows on closed surfaces and their generalization to flows with a finite number of special trajectories (fixed points, closed 
 trajectories and separatrixes). Most known of them are \cite{Bolsinov2004, Peixoto73, Fleitas75, Giryk96, kad05, Kruglov2018, Leontovich55, maks11, Poltavec1995, Dibeo-prish, prish03sum}. The main invariant is the separatrix flow diagram. We also note the paper of V.V. Sharko and A.A. Oshemkov \cite{Oshemkov98}, where new invariants are proposed and an overview of other invariants is given, and the work of \cite{Kibalko18}, where the trajectory equivalence of optimal Morse flows is investigated.

On closed three-dimensional manifolds, the topological classification of Morse fields and Morse--Smale fields with some restrictions was obtained by Y.Umanskyi 
and A. Prishlyak \cite{prish02ms, prish05ct, prish02top, prish07ct, hp20}.

The paper \cite{prish03tc} gives a topological classification of $m-$fields on 2- and 3-dimensional manifolds with a boundary, which is a generalization of Morse fields and are in general position with the boundary, but such fields do not generate flows.

In recent years, topological properties have been actively investigated and a topological classification for Morse flows on surfaces with a boundary has been constructed. In particular, in the works of M. V. Loseva and O. O. Prishlyak, topological classifications were obtained for flows on a two-dimensional disk with fixed points on the boundary \cite{{Loseva2016}}; for optimal flows on compact surfaces with the boundary \cite{Prishlyak2019, Prus17}, flows with mixed dynamics \cite{PrishLos2020, PPG2021}.
A complete topological classification of Morse flows on surfaces with the boundary using tricolor graphs is given in \cite{Prishlyak2019a}.

Another approach to the classification of Morse flow is connected with Morse functions with the same value of saddle critical point \cite{Smale60, Smale61}. Topological properties of such functions on closed manifolds were investigated in \cite{Kronrod1950, lychak-prish09, prish03pr, prish00, prish02mf, prish02te, Reeb1946} and on manifolds with the boundary in \cite{HladPrish2019, Hladysh-prish17, Hladysh_2019, Hladysh-prish16, Hlad-Prish2020, prish08}.
The main invariants in both cases are graphs embedded in a surface \cite{Prish97}.

\textbf{Purpose.} The purpose of this paper is to construct a complete topological invariant of Morse flows with fixed points on the boundary of an oriented compact three-dimensional manifold, perform a topological classification of these flows using the constructed invariant, and calculate the number of topologically non-equivalent Morse flows on a three-dimensional disk and bodies with handles.

\textbf{The structure of the paper.} The article contains 4 chapters. In \textit{the first chapter}, the process of constructing a complete topological invariant, a Pr-diagram, a Morse flow with fixed points on the boundary of a three-dimensional manifold is described, and examples are given. In \textit{the second chapter} the criterion of topological equivalence of flows due to the isomorphism of their Pr-diagrams is proved. Next, the properties of Pr-diagrams are described, the realization theorem is proved, and the procedure for restoring the flow at the boundary according to the Pr-diagram is explained, as well as the possibility of making global the local continuation of the flow from the boundary to the interior. \textit{The third section} is devoted to the application of the found invariant to calculate the number of topologically non-equivalent Morse fluxes on a three-dimensional disc with no more than six fixed points on the boundary and, as an example of application, the gravitational fl0w of the Sun-Earth system is considered. In \textit{the fourth chapter}, all possible topological structures of optimal Morse flows on a body with two handles and on a body with three handles are described, and the application of the obtained results to water flows in a river with islands is also described.

\section{Pr-diagram of Morse flow with fixed points on the boundary of a three-dimensional manifold}

A flow $X$ on a manifold $M$ with boundary $\partial M$ is called a Morse flow if it satisfies the following conditions:

1) the set of non-wandering points $\Omega(X)$ has a finite number of points, all of them of hyperbolic type;

2) if $u,v \in \Omega(X)$, the unstable manifold $W^u(u)$ is transversal to the stable manifold $W^s(v)$ in $\text{Int} M$;

3) the restriction of $X$ to $\partial M$ is a Morse flow (stable and unstable manifolds have a transversal section).

There are 6 types of fixed (singular) points on the boundary of three-dimensional manifolds (Fig. \ref{pic3}). They are determined by their indices.

\begin{figure}[ht]
\center{\includegraphics[height=9.5cm]{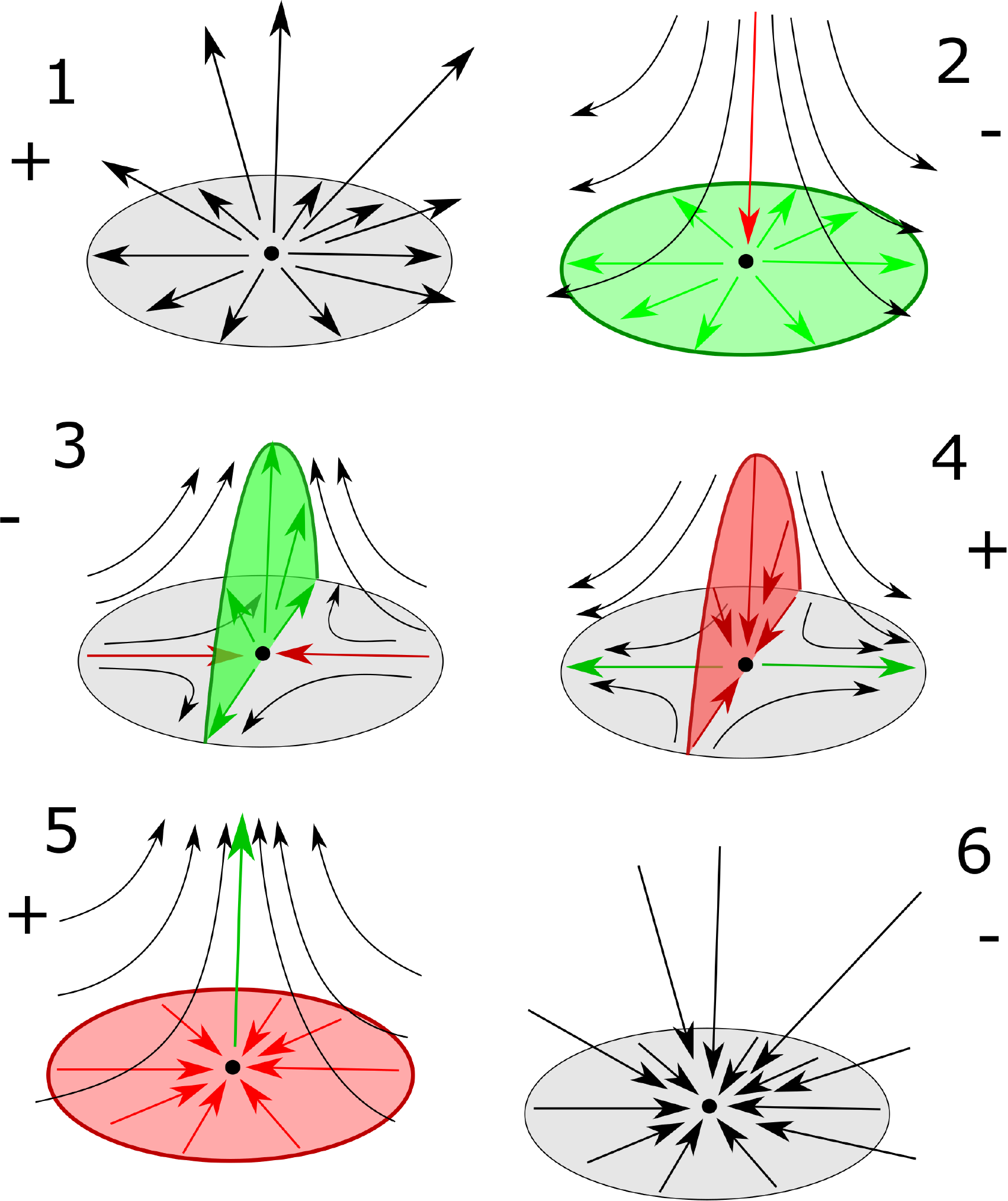}}
\caption{Singularities on the boundary for dimension 3}
\label{pic3}
\end{figure}

The pair $(p,q)$ is called the index of a singular point, where $p+q$ is equal to the dimension of the stable manifold of $X$, and $p$ is the dimension of the stable manifold of the flow narrowed to the boundary. On a three-dimensional manifold $p=0,1$ or 2 and $q=0$ or 1. For example, the source (Fig. \ref{pic3}.1) has index (0,0), and the sink (Fig. \ref {pic3}.6) has index (2,1). Other fixed points in fig. \ref{pic3} have the following indices: 2 -- (0,1), 3 -- (1,0), 4 -- (1,1), 5 -- (2,0). Every Morse flow 
has a source and a sink.

Let's construct a Pr-diagram of the flow, which has the form of a surface with a boundary and four sets of curves embedded in it.

At the same time, the surface divides the 3-manifold into two parts: one part contains the points of indices (0,0), (0,1) and (1,0), and the second part contains the points of indices that remained. In addition, each trajectory transversally intersects this surface at no more than one point. Such diagrams generalize Heegaard diagrams.

This can be done in two ways: 1) using the axes and co-axes of the distributions on m-handles; 2) using the boundary around a 1-dimensional stable manifold and its intersection with a 2-dimensional stable and unstable manifold.

$M-$handles correspond to fixed points.
So, there are 6 types of m-handles (Fig. \ref{pic7}):

\begin{figure}[ht]
\center{\includegraphics[height=6cm]{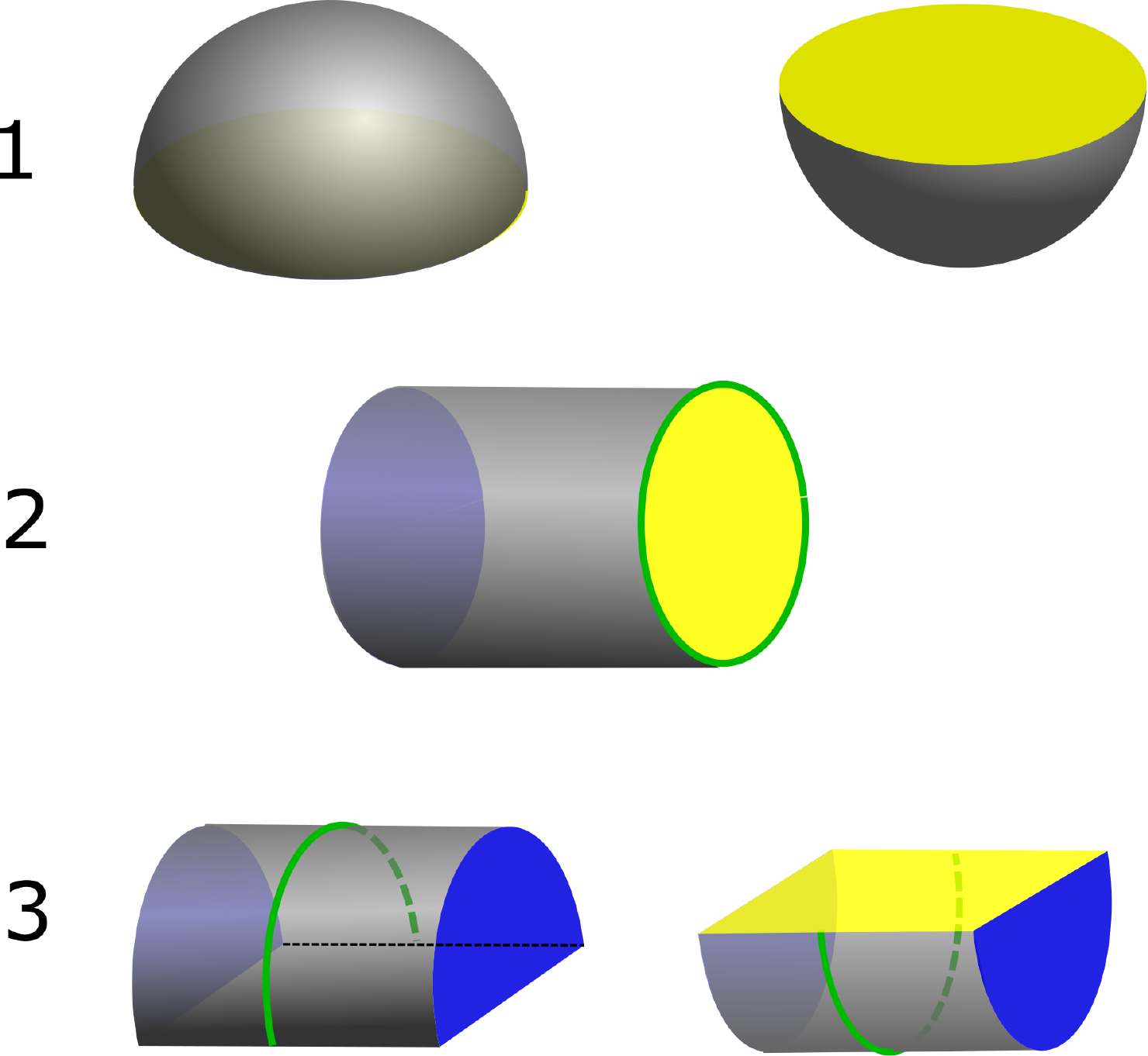}\ \ \ \ \ \  \ \  \  \includegraphics[height=6cm]{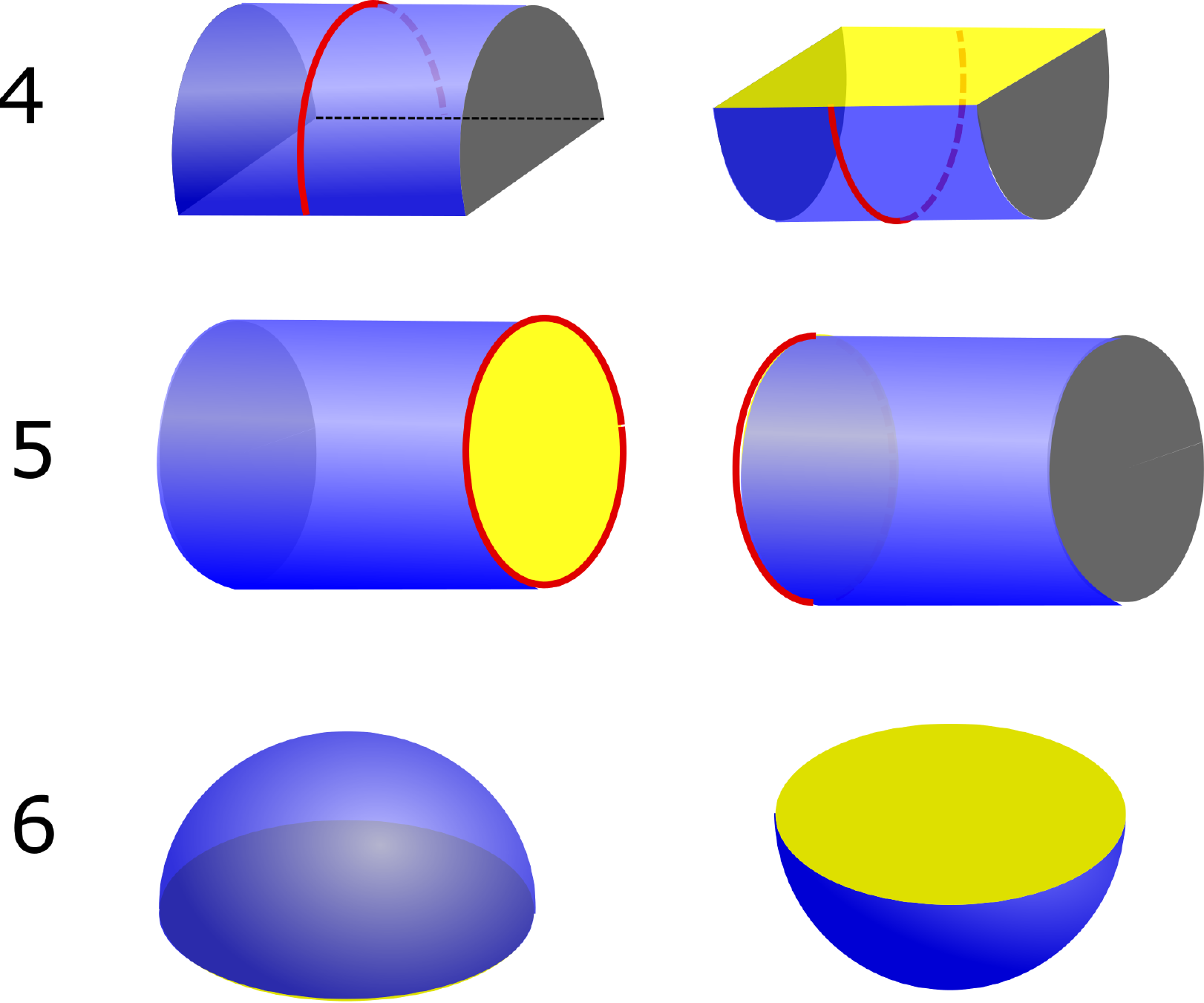}}
\caption{m-handles }
\label{pic7}
\end{figure}



We will describe the construction of the handle decomposition. To get the decomposition into m-handles, we start with handles of type 1 , and then attach others to them as follows: all the blue part of the boundary is attached to the union of the gray part, and its intersections with the yellow part are attached to the boundary of the yellow area. As a result of connecting all m-handles, we get 3-manifolds with a yellow edge. The surface $F$ is the gray part of the union boundary of m-handles of types 1-3. Green pen curves of type 3 form a $u$-system, and green pen curves of type 2 form a $U$-system. The pattern of red curves when connecting handles of the 4-th type forms a $v$-system and red curves of the 5-th type handles forms a $V$-system. If a handle of type 3 is attached to a handle of type 2, then we deform the green part of its section through the blue area and the next gray area. The same can be achieved by compressing the half of the type 3 handle to its middle containing the green arc ($D^2_+\times [-1,1] \to D^2_+\times[-1,0]$, $ (x,t) \to (x,0), t \in [0,1]$). As a result, $u$ becomes part of the cycle $U$. In addition, we apply a similar procedure to the curves $v$ and $V$.

The quintet $(F, u,U, v, V)$ is called the Pr-diagram of the flow $X$, in which $F$ is the gray part of the boundary of the union of handles of types 1, 2, and 3, and the curves $u ,U, v$ and $V$ are as defined above.

We will describe another way of constructing Pr-diagrams.

The surface $F$ is the closure of the intersection Int $M$ with a regular neighborhood of the following integral manifolds union:

1) sources, 1-dimensional stable manifolds, and singular points of index (1,0) in $\partial M$;

2) one-dimensional stable manifolds and singular points of index (0,1) in $\text{Int} M$;

 We select the following sets of arcs and circles on the surface $F$:

1) arcs $u$, which are the intersection of unstable manifolds, singular points of index (1,0) and the surface $F$;

2) arcs and circles $U$, which are the intersection of stable manifolds, singular points of index (0,1) and the surface $F$;

3) arcs $v$, which are intersections of stable manifolds, singular points of index (1,0) and the surface $F$;

4) arcs and circles $U$, which are the intersection of stable manifolds, singular points of index (2,0) and the surface $F$.

The set $(F, u, U, v, V)$ , which consists of a surface with an edge, a set of circles and arcs, will be a Pr-diagram of a Morse flow on a three-dimensional manifold with a boundary.

\textbf{Ex.}
Let us give an example of a Pr-diagram of a Morse flow.
Gradient flow of the height function on the complete torus (Fig. \ref{pic9}).

\begin{figure}[ht]
\center{\includegraphics[height=5.5cm]{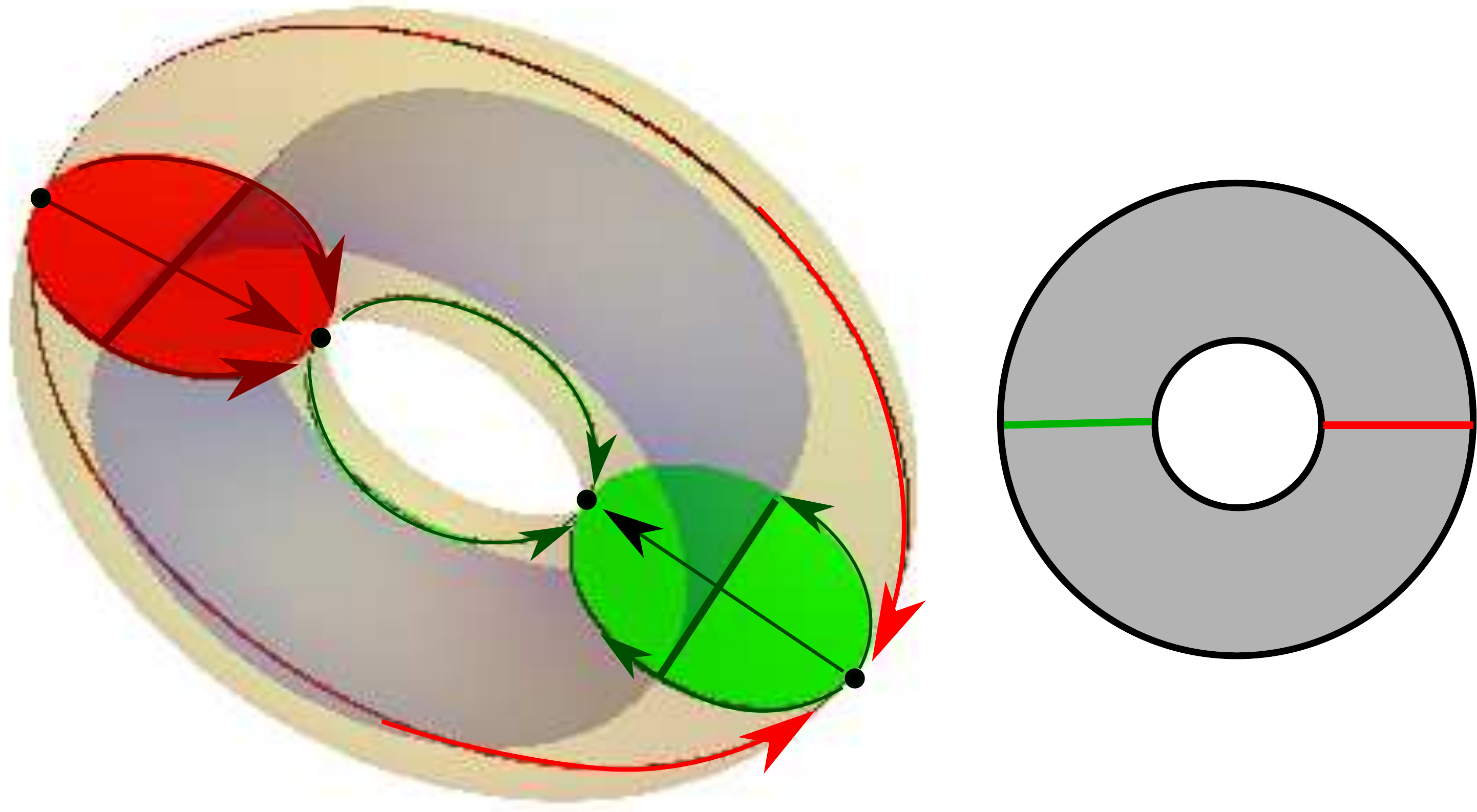}}
\caption{Optimal Morse flow on a full torus. }
\label{pic9}
\end{figure}

The height function on a complete torus has 4 critical points when limiting it to the boundary. The corresponding fixed points  have the first, third, fourth and sixth types.

Such flows occur during inflation of a flat tire as air flows inside the wheel.

In fig. \ref{pic111} shows the process of constructing Pr-diagrams for three flows on a three-dimensional disk with 6 fixed points.

\begin{figure}[ht]
\center{\includegraphics[height=8.5cm]{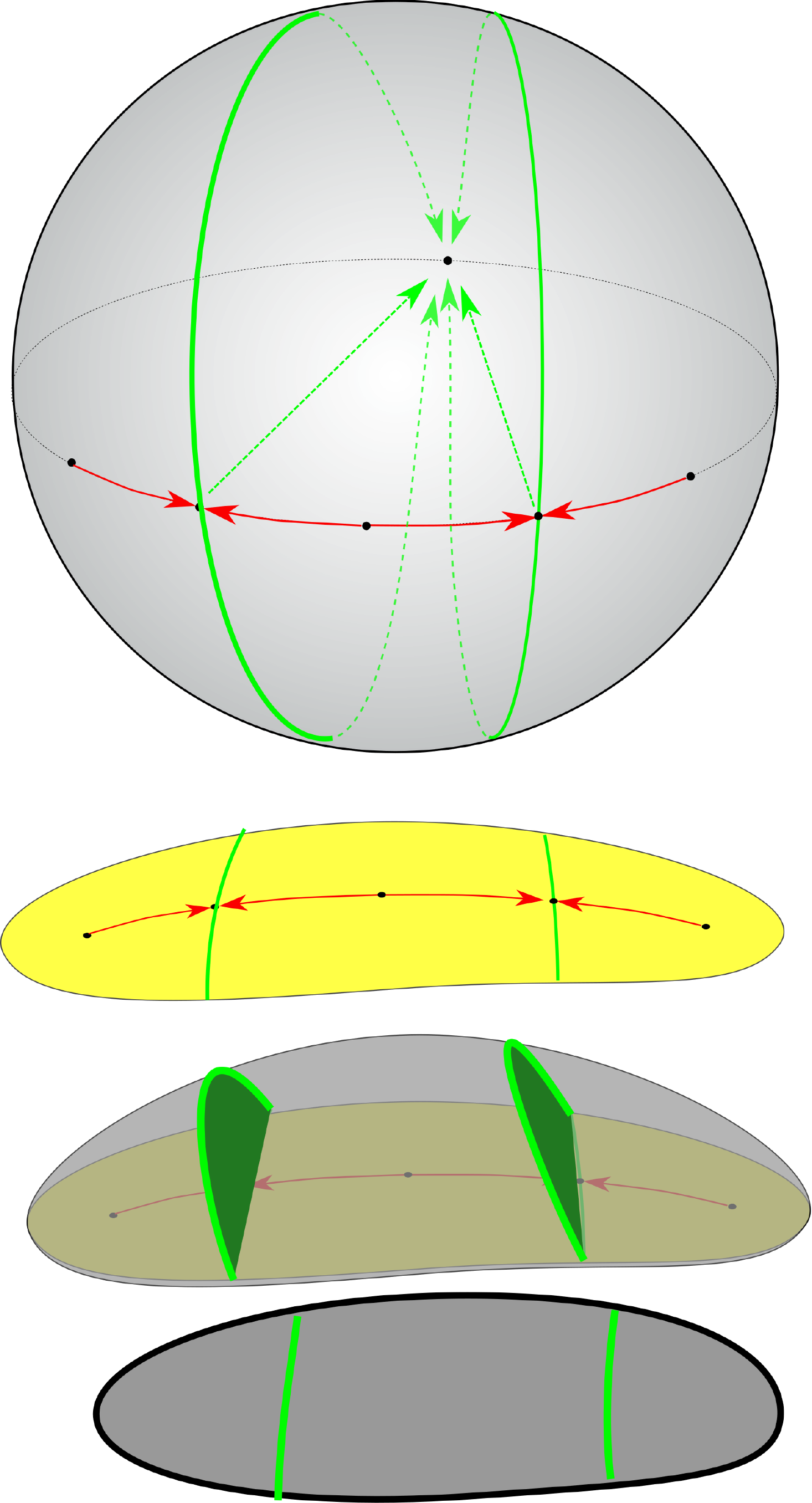} \ \ \ \ \includegraphics[height=8.5cm]{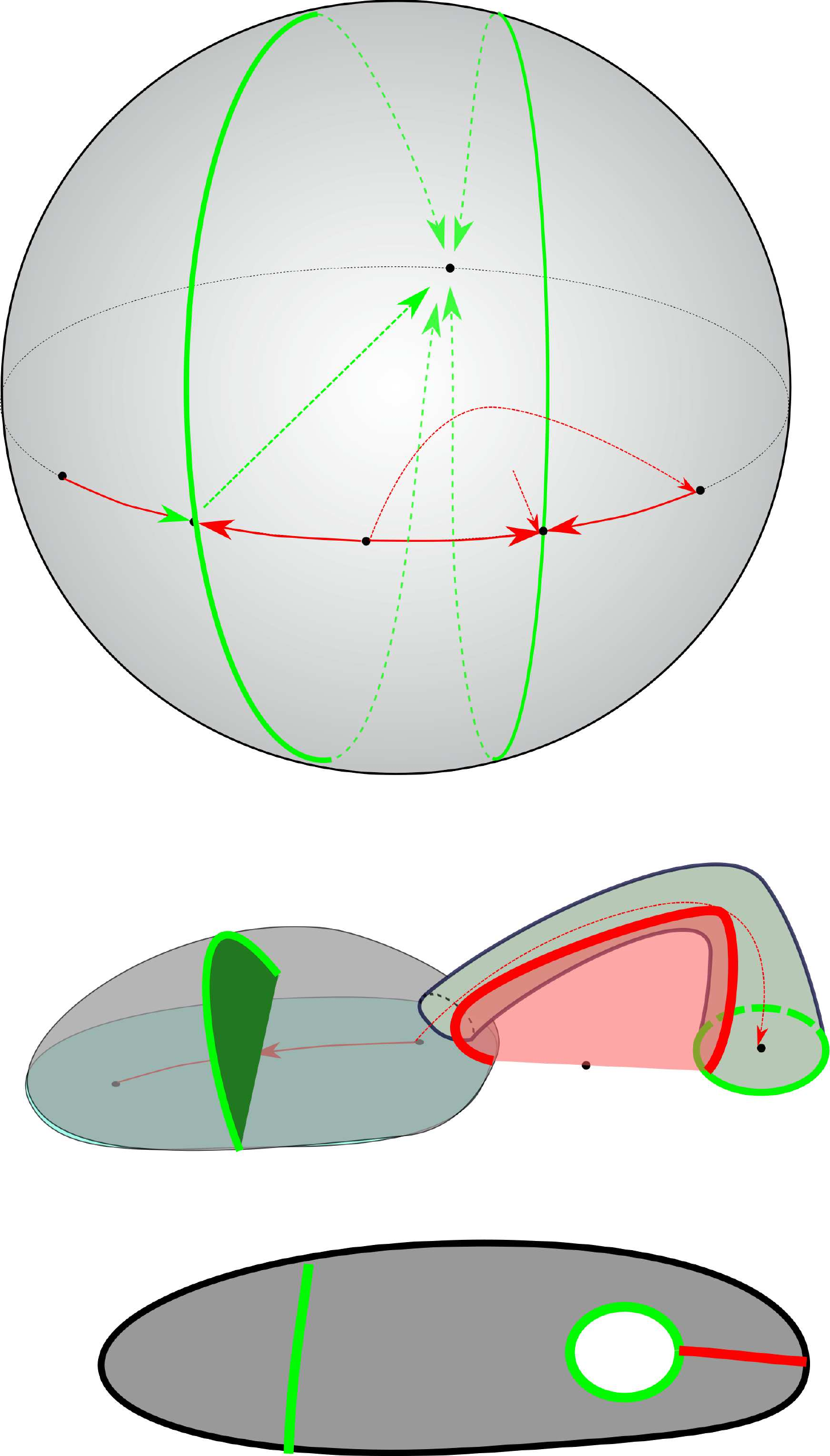} \ \ \ \ \ \includegraphics[height=8.5cm]{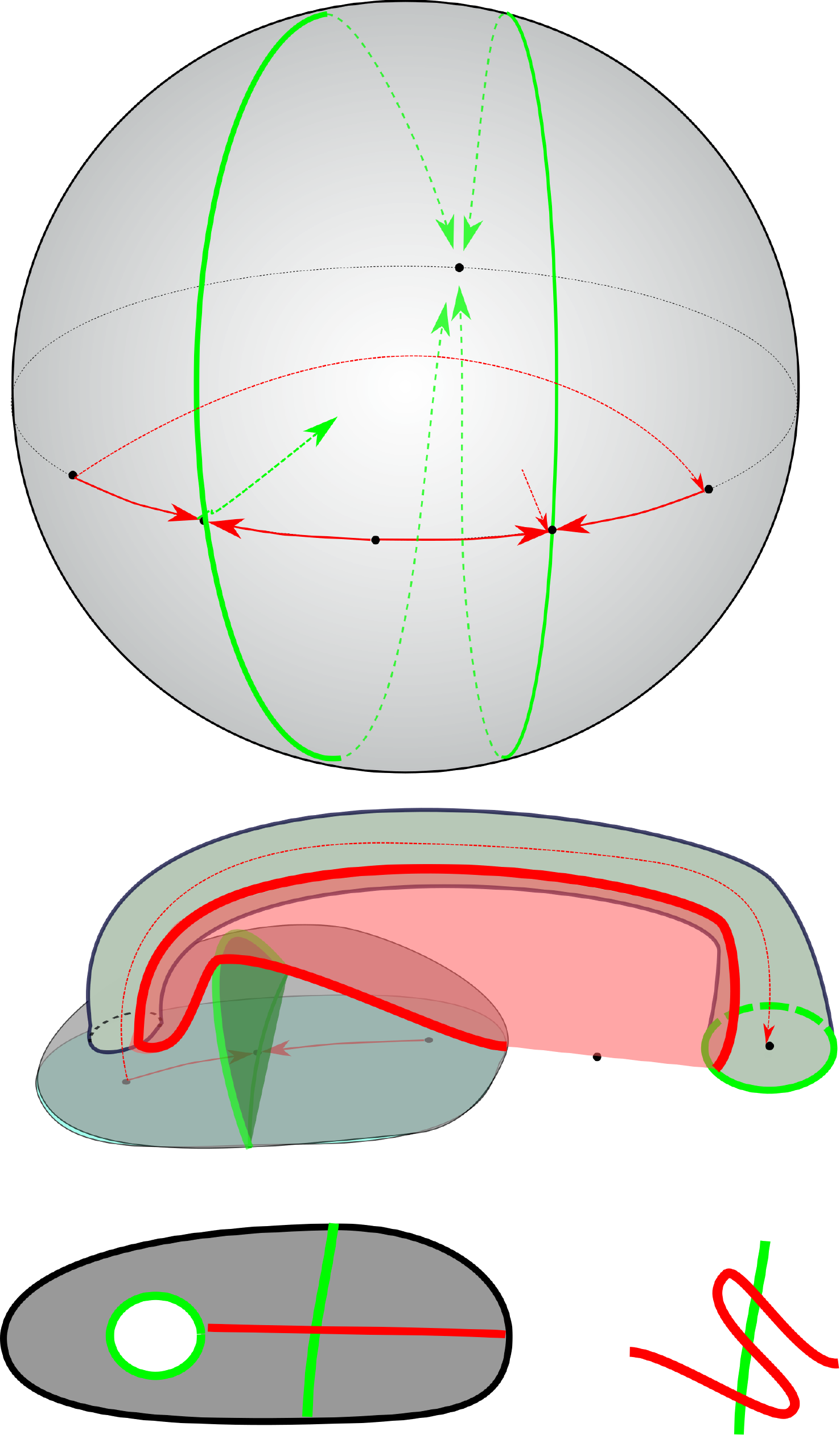}  }
\caption{Flows on $D^3$}
\label{pic111}
\end{figure}

Depending on the intersection of stable and unstable manifolds of dimension 2 inside the 3-disc, the final Pr-diagram could have 3 or more points of intersection between the red and green curves.


\section{Classification of Morse flows according to Pr-diagrams}

To classify Morse flows according to the Pr-diagram, we will prove the criterion of topological equivalence of flows, describe the topological properties of the diagrams and determine which of the diagrams can be implemented by Morse flows. In addition, if the Pr-diagram defines a flow on a three-dimensional manifold, then it also defines a flow that is its boundary restriction. Let's set the topological type of this stream. Also consider the inverse problem, can the local continuation of the flow from the boundary be made global?

\subsection{The criterion of topological equivalence of flows}
Two Pr-diagrams of a Morse flow are isomorphic if there exists a surface homeomorphism that maps sets of arcs and circles into sets of arcs and circles of the same type.


\begin{theorem}
Two Morse-Smale flows on a three-dimensional manifold with the boundary are topologically trajectory equivalent if and only if their Pr-diagrams are isomorphic.
\end{theorem}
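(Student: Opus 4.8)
The plan is to establish the two implications separately, treating the Pr-diagram as a complete invariant in the spirit of Heegaard diagrams.

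\textbf{Necessity.} Assume a topological trajectory equivalence $h\colon M\to M'$ is given; I would show it induces an isomorphism of diagrams. Since $h$ conjugates the flows, it carries $\Omega(X)$ bijectively onto $\Omega(X')$ and sends each fixed point to one of the same index, the index being determined by the local topological conjugacy class and hence a trajectory-equivalence invariant. Consequently $h$ maps each stable or unstable manifold onto the corresponding one of the same dimension. Because $F$ is defined canonically as the frontier of a regular neighborhood of a prescribed union of invariant manifolds, and regular neighborhoods are unique up to ambient isotopy, I can adjust $h$ by such an isotopy so that $h(F)=F'$ without altering its action on trajectories up to equivalence. The four systems $u,U,v,V$ are the traces on $F$ of the one- and two-dimensional invariant manifolds of the appropriate indices, so $h|_F$ sends each system onto its primed counterpart and is the required isomorphism.

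\textbf{Sufficiency.} This is the substantive direction. Given a surface homeomorphism $\phi\colon F\to F'$ matching the four systems, I would reconstruct a trajectory equivalence $H\colon M\to M'$. Here $F$ splits $M$ into the piece carrying the handles of indices $(0,0),(0,1),(1,0)$ and the piece carrying $(1,1),(2,0),(2,1)$, and each trajectory crosses $F$ at most once, so $F$ is a complete section. The strategy is the standard handle-by-handle reconstruction: each m-handle carries a flow model determined up to conjugacy by its index, since hyperbolicity forces the local linear normal form, and the attaching data are exactly what the curve systems encode. I would define $H$ first on a neighborhood of each fixed point by the model conjugacy, then extend along the one-dimensional separatrices, and then across the two-dimensional separatrices, whose mutual intersections inside $M$ are recorded by the intersection points of the red curves $v,V$ with the green curves $u,U$. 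On the complementary flow-box regions, where the flow is a trivial product, $\phi$ extends using the product structure on $F\times[0,1]$.

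The main obstacle is the consistent gluing. I expect the delicate point to be verifying that the local conjugacies on adjacent m-handles agree on their shared attaching regions, and that this agreement is precisely what $\phi$ guarantees once it preserves the curve systems \emph{together with} their intersection pattern. In particular the heteroclinic connections between two-dimensional stable and unstable manifolds must be matched, which is why the number and cyclic order of the $u,U$--$v,V$ intersection points on $F$ must be preserved, not merely the isotopy classes of the individual systems; this is exactly the phenomenon flagged after the disk examples. Once the conjugacy is built on the skeleton of fixed points and invariant manifolds and shown compatible with the attaching maps, extending over the remaining product regions is routine, as there the flow is conjugate to a constant field on $F\times[0,1]$ with $\phi$ prescribing the conjugacy on the $F$-factor.
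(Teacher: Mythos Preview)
Your outline is correct at roughly the level of detail the paper itself gives, but the mechanism you use in each direction differs from the paper's.

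For necessity, the paper does not isotope the conjugating homeomorphism so that $F$ lands on $F'$. It defines the diagram isomorphism directly by trajectory-projection: for $x\in F$ set $h(x)=\varphi(\gamma_x)\cap F'$, where $\gamma_x$ is the trajectory through $x$ and $\varphi$ is the given equivalence. This is shorter and sidesteps the regular-neighborhood uniqueness argument. Your route works too, but note that the ambient isotopy carrying $h(F)$ to $F'$ must be taken along trajectories (or at least preserving the invariant manifolds of $X'$); a generic regular-neighborhood isotopy need not transport the traces of the invariant manifolds on $h(F)$ to the curve systems on $F'$, which is exactly what you need.

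For sufficiency, the paper avoids the handle-by-handle gluing entirely. It uses that $F$ is a global section: the isomorphism $h$ already gives a bijection between regular trajectories, while neighborhoods of the $u,U,v,V$ curves determine the correspondence on the one-dimensional separatrices. The surface splits $M$ into halves $M_1,M_2$; one then chooses Riemannian metrics on $M$ and $M'$ making corresponding (semi-)trajectories lying in the one- and two-dimensional invariant manifolds have equal length, and defines the conjugacy on each half by matching arc-length ratios along trajectories. The two halves agree on $F$, giving the global homeomorphism. Your reconstruction via local models and attaching compatibility reaches the same end and has the virtue of making the role of the $u,U$--$v,V$ intersection pattern explicit, but it trades a single global parametrization for a combinatorial gluing whose consistency checks are precisely what the paper's arc-length device bypasses.
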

\begin{proof}
\textit{Necessity.} Let $\varphi: M\to M $ be the topological equivalence of the flow generated by the field $X$ to the flow generated by the field $X^{\prime}$. Let us construct the homeomorphism $h:F\to F^{\prime}$ between the surfaces of their Pr-diagrams. If $\gamma_x$ is a trajectory passing through the point $x\in F$, then the desired homeomorphism is given by the formula $h(x)= f(\gamma_x)\cap F^{\prime}$.

\textit{Sufficiency. } The proof is similar to the proofs for closed manifolds in dimensions 2 and 3, so we only give its scheme. If the isomorphism $h:F\to F^{\prime}$ of the Pr-diagrams is given, then it establishes a bijection between the trajectories of the vector field: the trajectories crossing the surface $F$ are matched with the trajectories passing through the images of intersections, and the trajectories , belonging to 1-dimensional stable or unstable manifolds, correspond to the trajectory, which is determined by mapping the surroundings of the curves $u,U,v,V$. The surface $F$ splits the manifold $M$ into two parts: the first part $M_1$ contains those semi-trajectories that enter the points from $F$, and the second part $M_2$ contains the semi-trajectories that leave $F$. Let us construct a topological equivalence mapping $M_1$ to $M_1^{\prime}$. We choose such Riemannian metrics on these manifolds that the corresponding trajectories and semi-trajectories belonging to 1-dimensional and 2-dimensional stable and unstable manifolds have the same length. Let's construct a homeomorphism of the corresponding trajectories and semi-trajectories that preserves the ratio of the lengths of the arcs of the curves. Together, these homeomorphisms will define the desired homeomorphism $M_1$ on $M_1^{\prime}$. Similarly, we construct the homeomorphism $M_2$ on $M_2^{\prime}$. Since these two homeomorphisms coincide on a common boundary, they define the desired homeomorphism $M$ on $M^{\prime}$.
\end{proof}
\subsection{Topological properties of Pr-diagrams of Morse flows}
\begin{theorem}
Pr-diagrams of Morse flows have the following properties:

1) $U_i, V_i \subset \partial M$, $\text{Int } u_i, \text{Int }  v_i \subset \text{Int } M$, $\partial u_i, \partial  v_i \subset \partial M$;

2) $\partial U_j \subset \cup_i \partial u_i,  \partial V_j \subset \cup_i \partial v_i$;

3) $ U_i \cap U_j = \emptyset $, if $i \ne j$, \ \
$ u_i \cap u_j = \emptyset $, if $i \ne j$, \ \
$ V_i \cap V_j = \emptyset $ , if $i \ne j$, \ \
$ v_i \cap v_j = \emptyset $ , if $i \ne j$, \ \
$\text{Int} u_i \cap \text{Int} U_j = \emptyset $, \ \
$ \text{Int } v_i \cap \text{Int } V_j = \emptyset $, \ \
$\partial u_i   \cap \partial v_j = \emptyset $.

4) $U_k$ is a closed curve or belongs to a cycle consisting of $U_i$ and $u_j$, and such that $u_j$ always turns to the left at the ends of the arcs; a similar property is also true for $V_k$.

5) if we cut the surface $F$ along $u_i$ and make a spherical rearrangement according to $U$-cycles, we will get a union of 2-disks. The same is done for $v_i$ and $V_j$.
\end{theorem}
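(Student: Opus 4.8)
The plan is to derive all five properties from two ingredients already available in the paper: the six local normal forms at the boundary singularities (Fig.~\ref{pic3}) and the handle-decomposition description of the diagram (Fig.~\ref{pic7}). Near a point of index $(p,q)$ I would fix coordinates on a half-space in which $X$ is linear, with the $p$ contracting and the one possibly expanding tangent direction lying along $\partial M$ and the normal coordinate contracting precisely when $q=1$. Recording once, for each of the six types, where $W^s$ and $W^u$ sit relative to $\partial M$ and how each meets the piece of $F$ built around the corresponding $m$-handle reduces properties 1--3 to inspection.

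First I would treat property 1. The curves $U$ and $V$ are traces on $F$ of stable manifolds of the points of index $(0,1)$ and $(2,0)$, whose relevant one-dimensional pieces the index data place tangent to $\partial M$, so $U_i,V_i\subset\partial M$. By contrast the $u$- and $v$-arcs are traces of two-dimensional manifolds of index-$(1,0)$ points whose interiors are swept into $\text{Int}\,M$ by the normal direction, leaving only their endpoints — the singular points themselves — on $\partial M$; this gives $\text{Int}\,u_i,\text{Int}\,v_i\subset\text{Int}\,M$ and $\partial u_i,\partial v_i\subset\partial M$. Property 2 is the local observation that an endpoint of a $U$-arc occurs exactly where the stable manifold limits onto an index-$(1,0)$ point, i.e.\ on $\cup_i\partial u_i$, and symmetrically for $V$ and $v$. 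Property 3 collects disjointness statements: distinct manifolds of the same kind are disjoint because a Morse flow partitions $M$ into trajectories with unique forward and backward limits, so two stable (or two unstable) manifolds cannot meet; the mixed statements $\text{Int}\,u_i\cap\text{Int}\,U_j=\emptyset$ and $\text{Int}\,v_i\cap\text{Int}\,V_j=\emptyset$ are immediate since $\text{Int}\,u_i,\text{Int}\,v_i\subset\text{Int}\,M$ while $U_j,V_j\subset\partial M$, and $\partial u_i\cap\partial v_j=\emptyset$ follows from transversality (condition 2 of the Morse-flow definition).

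For property 4 I would argue globally along the invariant manifolds. Following a $U$-arc to an endpoint lands on an index-$(1,0)$ point, where it is continued by a $u$-arc; iterating, the concatenation must close up, so each $U_k$ is either an embedded circle or a cycle alternating $U$-pieces and $u$-pieces. The left-turning condition is a consequence of the coherent co-orientation of the stable sheets: at each index-$(1,0)$ point the incoming and outgoing sheets meet $F$ on a prescribed side, which I would verify once in the local model and then propagate around the cycle. The argument for $V_k$ and $v$ is identical.

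The main obstacle is property 5, which is the genuine Heegaard-type statement and carries the real content. Here I would use that $F$ bounds, on the side $M_1$ containing the points of index $(0,0),(0,1),(1,0)$, a regular neighborhood of the corresponding integral $1$-complex, so that $M_1$ is a handlebody assembled from the type $1$--$3$ $m$-handles, with the $u$-arcs and $U$-cycles playing the role of the co-cores and belt curves of its $1$-handles. Cutting $F$ along the $u_i$ and surgering along the $U$-cycles then undoes every handle above the $0$-handles, so the result is the disjoint union of the boundary spheres of the $0$-handles with the attaching regions removed, i.e.\ a union of $2$-disks; the single source guaranteed by the Morse condition keeps the $0$-handle count under control. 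The symmetric claim for $v_i,V_j$ follows by running the same argument on $M_2$ with the flow reversed. The delicate point is to show that cutting and surgering in the stated order really reduces $F$ to disks rather than to a surface of positive genus — equivalently, that $\{u_i\}\cup\{U_j\}$ is a \emph{complete} system of attaching data — and this is exactly where the handlebody structure of $M_1$, rather than the local normal forms alone, must be invoked.
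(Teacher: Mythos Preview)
Your proposal is correct and follows essentially the same strategy as the paper: properties 1--3 are read off from where the relevant stable and unstable manifolds sit relative to $\partial M$, property 4 is identified with the boundary circle of a two-dimensional invariant manifold, and property 5 comes from the source/type-1 handles. The only noteworthy difference is in property 5, where the paper is more direct than your handlebody argument: it simply observes that after cutting along the $u_i$ and surgering the $U$-cycles each remaining component is the trace on $F$ of the unstable manifold of an index-$(0,0)$ point, hence a $2$-disk; this bypasses the ``delicate point'' you flag about completeness of the attaching system. One small slip to fix: nothing guarantees a \emph{single} source, so the outcome is a disjoint union of disks indexed by the index-$(0,0)$ points --- but this does not affect the validity of your argument.
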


 \textbf{Remarks.}
 The last 2-disks ($u$-area) correspond to sources (type 1), $U$-cycles correspond to points of index (0,1) (type 2), $u$-curves to points of index (1,0) (type 3), $v$-curves -- points of index (1,1) (type 4), $V$-cycles -- points of index (2,0) (type 5), $v$-areas -- drains (type 6).

\begin{proof}
1) This follows from the fact that the corresponding trajectories of stable and unstable manifolds belong to the interior or boundary of the manifold $M$.

2) Boundaries of unstable manifolds of index (0,1) consist of curves $U_i$ and intersections of unstable manifolds with $\partial M$. Since the boundaries of these intersections coincide with $\partial u_i$, we have the first inclusion. The second inclusion is proved similarly.

3) Conversely, if there were a point belonging to these intersections, the trajectory passing through it would belong to two different stable (or two unstable) manifolds of dimension 2, which is impossible.

4) These cycles correspond to the boundaries of unstable manifolds of index (0,1) or stable manifolds of index (2,0).

5) These disks correspond to unstable manifolds of points of index (0,0).
\end{proof}

\subsection{Implementation Theorem}

\begin{theorem} If a surface $F$ with 4 sets of curves has properties 1-5, then it is a Pr-diagram of a Morse flow.
\end{theorem}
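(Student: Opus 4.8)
The plan is to prove this realization theorem by reversing the construction of the Pr-diagram: starting from an abstract quintet $(F, u, U, v, V)$ satisfying properties 1--5, I would reconstruct a three-manifold $M$ together with a Morse flow $X$ whose Pr-diagram is isomorphic to the given one. The key insight is that the five properties encode exactly the combinatorial data needed to assemble the m-handles of types 1--6 in the correct order, mirroring the handle-decomposition description given earlier in the paper.

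First I would use property 5 to recover the ``lower'' part of the manifold. Since cutting $F$ along the arcs $u_i$ and performing spherical surgery along the $U$-cycles yields a disjoint union of 2-disks, each such disk can be filled in to produce the source regions (type 1 m-handles, index $(0,0)$), and the $U$-cycles and $u$-arcs then prescribe how to attach the type 2 (index $(0,1)$) and type 3 (index $(1,0)$) m-handles, as indicated in the Remarks. Property 4 guarantees that the $U$-cycles are genuine closed curves (or close up correctly through the $u$-arcs with the left-turning condition), which is precisely what makes the attachment of these handles consistent and well-defined. This builds the first piece $M_1$, the regular neighborhood of the integral manifolds listed in the second construction, with $F$ as (part of) its boundary.

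Next I would carry out the dual construction for the ``upper'' part $M_2$ using the $v$-arcs and $V$-cycles: applying property 5 on this side produces the sink regions (type 6, index $(2,1)$), while the $V$-cycles and $v$-arcs dictate the attachment of the type 5 (index $(2,0)$) and type 4 (index $(1,1)$) m-handles. Properties 1--3 ensure that these two constructions are compatible along $F$: the $u,v$ arcs have their interiors in $\mathrm{Int}\,M$ and endpoints on $\partial M$, the various curve families meet only where the transversality/disjointness conditions permit, and the intersection points of red with green curves on $F$ are exactly the points through which trajectories cross from $M_1$ to $M_2$. Gluing $M_1$ to $M_2$ along $F$ then yields a closed manifold-with-boundary $M$, and defining the flow handle-by-handle (each m-handle carrying its standard local model from Fig.~\ref{pic7}) produces a vector field that is tangent to $\partial M$ and hence generates a flow.

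The main obstacle will be verifying that the resulting vector field is genuinely a \emph{Morse} flow in the sense of the three defining conditions --- in particular, that the transversality of stable and unstable manifolds in $\mathrm{Int}\,M$ (condition 2) is automatic from the combinatorial transversality built into properties 3 and 5, and that no unwanted closed trajectories or non-hyperbolic behavior is introduced when the standard handle models are glued. I would handle this by checking that each trajectory crosses $F$ at most once (which follows from the bipartite $M_1$/$M_2$ splitting and the single-point-per-trajectory condition stated in the construction) and that the local hyperbolicity of each m-handle model is preserved under the attaching maps, since the attaching is done along the gray/blue/yellow boundary pieces in a way that matches the linearized flow directions. Finally, by the criterion of the first theorem, the Pr-diagram of the flow so constructed is isomorphic to $(F, u, U, v, V)$, completing the proof.
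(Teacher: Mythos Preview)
Your proposal is correct and follows essentially the same route as the paper: reconstruct the 3-manifold by attaching m-handles of types 1--3 to one side of $F$ according to the $u$/$U$ data (using properties 3--5 to make the attachments well-defined and to cap off with disks), then attach handles of types 4--6 on the other side via the $v$/$V$ data, and equip each handle with its standard local model $\{\pm x,\pm y,\pm z\}$ before smoothing. The paper's proof is in fact terser than yours---it does not explicitly discuss the Morse conditions you flag as the main obstacle---so your plan already covers everything the paper does and a bit more.
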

\begin{proof} Since by property 3) $ u_i \cap u_j = \emptyset $, if $i \ne j$, then we can choose sufficiently small non-intersecting regular neighborhoods of them. Let's glue the handles of the third type on the gray areas to these circles (at the same time, the green curves are displayed on the $u_i$ curve). If these curves were included in $U$-cycles, then we will carry out the deformation of these cycles, which is inverse to the deformation of $U$-cycles when gluing the handle of type 3 to the handle of type 2. Next, for these cycles, we will select the part of the surface that remained on the union, and the blue areas of the glued pens are regular circles. We will glue the gray areas of the 2nd type handles to these circles. The parts of the surface that remained, combined with the blue areas, will be sealed with the gray area of the handles of the first type (this can be done according to property 5)). We will carry out similar constructions for
$v$- and $V$-curves by gluing handles of types 4, 5, and 6 to the surface. Inside each handle, we set the standard vector field $\{ \pm x, \pm y,\pm z \}$ as in fig. \ref{pic3}. After smoothing these fields in the places of gluing, we will get the desired vector field.
\end{proof}


\subsection{Restoration of flow at the boundary according to the Pr-diagram.}

We will show how the topological type of flow restriction on the boundary of a three-dimensional manifold can be determined by the Pr-diagram.
We present the boundary of the 3-manifold in the form of the union $\partial M = F_u \cup F_v$, where $\partial F_u = \partial F_v = F_u \cap F_v$. $F_u$ contains fixed points of types 1--3 and the flow on it is determined by $F$ and curves $u_i$, $U_j$. $F_v$ contains fixed points of types 4--6 and the flow on it is determined by $F$ and curves $v_i$, $V_j$. At the common border, the flow is directed from $F_u$ to $F_v$. To construct $F_u$, consider $U$-cycles on $F$ and their regular neighborhoods. Let $U^1_i$ be those components of the boundary of these neighborhoods that do not intersect with $\partial M$. Let's carry out spherical reconstructions along them - cut $F$ along them and glue the resulting closed curves with 2-discs. We denote the resulting surface by $F_u$. The center (arbitrary internal point) of each curve $u_i$ will be the saddle point of the flow, and the curve itself will be an unstable manifold for it (the union of two separatrixes). In each region, into which $u_i$ curves divide $F_u$, we will choose a source and draw one separatrix to each saddle that lies on the border of this region. All other trajectories will go from the sources to $\partial F_u$. We will carry out similar constructions with $F$ and curves $v_i$, $V_j$. We obtain the surface $F_v$ and the flow with saddle points and drains. As a result, we get a flow on $\partial M$ as a union of constructed flows. After smoothing on the boundary $\partial F_u$, we will get the desired flow (Smoothing can be omitted if a flow perpendicular to its boundary is built on each surface).

\subsection{On the continuation of the flow from the boundary neighborhood to the interior of the 3-manifold}

The topological type of continuation of the flow from the boundary of the 3-manifold to the regular neighborhood of the boundary depends on the type of singular points (as points of the 3-manifold), which are defined as whether there is a trajectory belonging to the interior and entering the singular point. If it is, then the second number in the index is 1, and if it is not (which is equivalent to the fact that there is a trajectory that starts from a special point), then it is 0. In addition, the sum of the Poincaré indices must be equal to 0 (the Euler characteristic of the doubling of the manifold).

\begin{figure}[ht]
\center{\includegraphics[height=5.5cm]{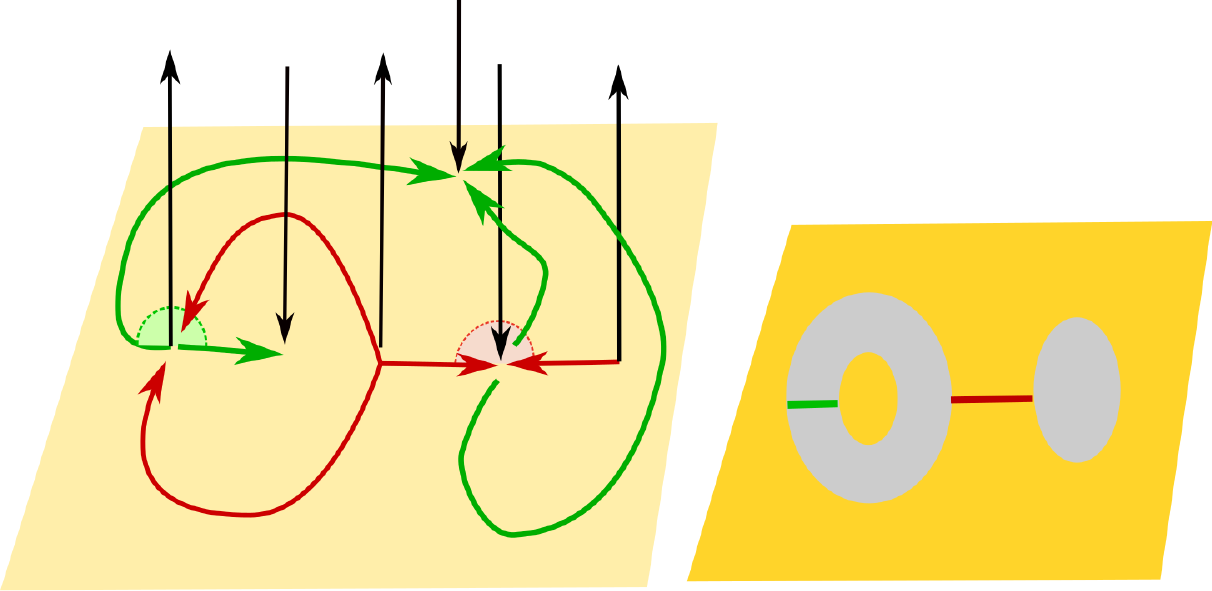}}
\caption{Impossible flow on $D^3$}
\label{pic96}
\end{figure}

The existence of a global extension to the entire 3-manifold is equivalent to the existence of a flow diagram for which the procedure described in the previous section will give the initial flow on the surface. The construction of the $F$ surface and the $u_i$, $U_j$ curves is similar to that for the given flow. At the same time, there is arbitrariness in the choice of sources from which the trajectories that go to points of type (0,1) originate. After carrying out a similar procedure with the curves $v_i$, $V_j$, we will get another surface with the same boundary. Then the task of constructing a Pr-diagram consists in finding a homeomorphism of one surface onto another, which coincides with the identical reflection on the boundary. This is equivalent to the construction on the first surface with the curves $u_i$,$U_j$ of the curves $v_i$ with their specified points on the boundary (the curves $V_j$ already lie on the boundary). At the same time, the constructed Pr-diagram should specify a 3-disc. This means that after gluing to the boundary components of the 2-discs, the $U-$ and $V-$ loops will form the meridian system of the Hehor diagram of the three-dimensional sphere.

In fig. \ref{pic96} an example is given when the identical homeomorphism of boundaries (three circles) cannot be extended to the homeomorphism of surfaces (2-disc combined with a ring). Thus, this flow around the sphere does not continue to the 3-disc.

\section{Morse flows on a three-dimensional disk}

In order to find all flows with a given set of fixed points on the boundary of the 3-disc, we will find all possible extensions from the boundary. At the same time, we will assume that the flows do not have internal curvilinear diagonals with green and red sides. Since the Euler characteristic of a closed surface is even, the number of fixed points on the boundary is also even. With two fixed points (source and drain), there is a single flow. Its Pr-diagram is a two-dimensional disk without red and green curves.

\subsection{Flows with four fixed points on the boundary}

It follows from the definition of a Morse flow that it has a source and a sink (points of type 1 and 6). A flow restriction on a sphere can have two sources, a saddle and a sink, or a source, a saddle, and two sinks. These two flows are obtained from each other with the accuracy of topological equivalence by changing the directions of movement along all trajectories. Therefore, in the future, we will consider only those flows in which the boundary restriction has at least sources and sinks. Then two cases are possible: 1) an internal trajectory emerges from the saddle, and therefore from one of the sources, on the sphere, 2) internal trajectories enter these points. Pr-diagrams of these flows are shown in fig. \ref{pic98}.

\begin{figure}[ht]
\center{\includegraphics[height=2.2cm]{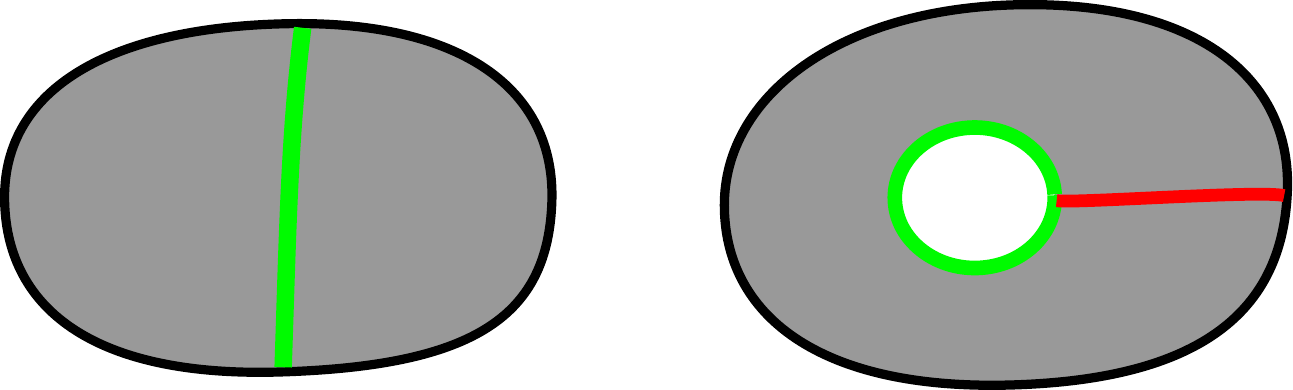}}
\caption{Two flows with four fixed points on $D^3$}
\label{pic98}
\end{figure}

Therefore, there are four topologically inequivalent flows with four fixed points.

\subsection{Flows with six fixed points on the boundary}

In the case of six fixed points, the following flows on the sphere are possible: 1) three sources, two saddles and a sink, 2) two sources, two sinks and two saddles, one of the stable one-dimensional manifolds forms a loop, 3) two sources, two sinks and two saddles, stable one-dimensional manifolds do not form loops, 4) one source, three sinks and two saddles. The number of continuations of flows in the fourth version is the same as in the first (because they are obtained from each other by changing the directions of movements on the trajectories). Three flows of the first type have already been described earlier (Fig. 6). If a trajectory enters the middle source on the sphere from the right source, then the possible flow is as shown in fig. \ref{pic10}.

\begin{figure}[ht]
\center{\includegraphics[height=4.5cm]{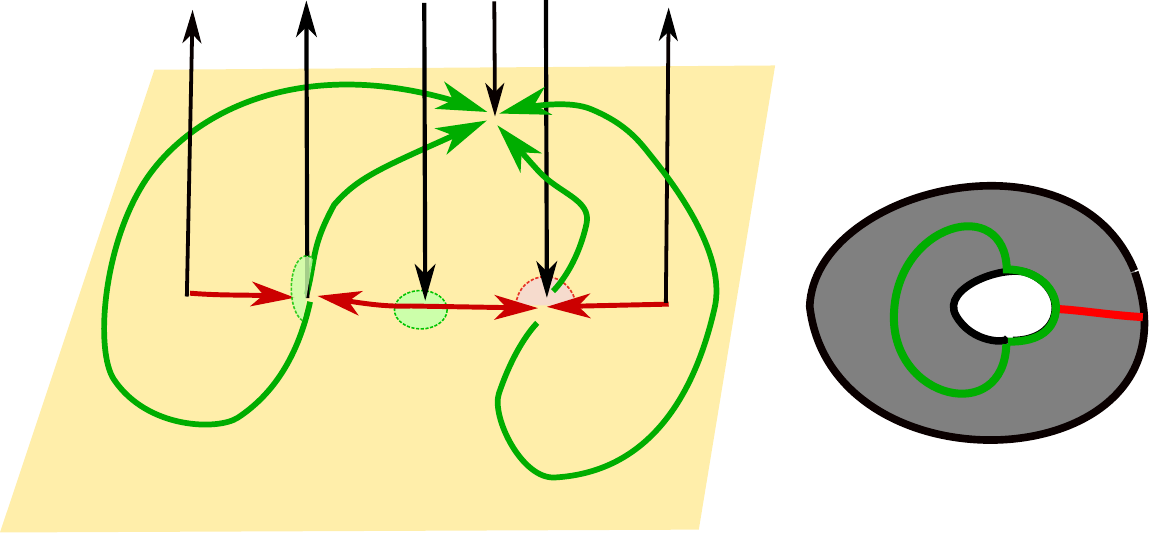}}
\caption{The flow on $D^3$ of the first type and its Pr-diagram}
\label{pic10}
\end{figure}

If the trajectory entering the middle fixed point would come from the left source (and all directions are as in Fig. \ref{pic10}), then we would get a flow that cannot be extended into the middle of the 3-disc.

If only one of the sources on the sphere is a source on the three-dimensional disk, then two options are possible: this source is in the center (Fig. \ref{pic11} on the left) or on the side (Fig. \ref{pic11} on the right).

\begin{figure}[ht]
\center{\includegraphics[height=2.5cm]{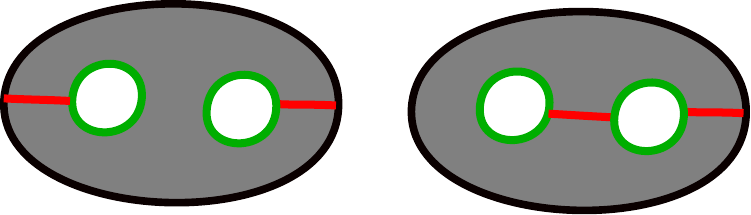}}
\caption{Two Pr-flow diagrams of the first type}
\label{pic11}
\end{figure}

Therefore, for the first type of flows on the sphere (and therefore also for the fourth), there are 6 non-equivalent extensions to the flow on the 3-disc.

Next, we will consider the second type of flows on the sphere. Two examples of the continuation of such flows are shown in fig. \ref{pic13b}.

\begin{figure}[ht]
\center{\includegraphics[height=4.7cm]{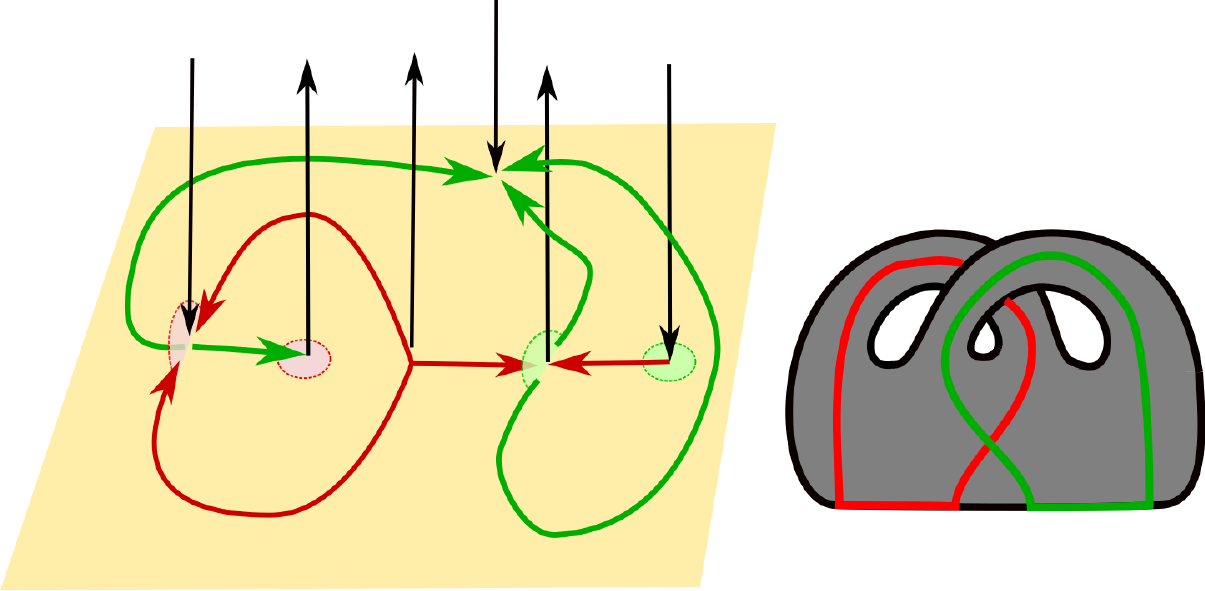}}


\center{\includegraphics[height=4.7cm]{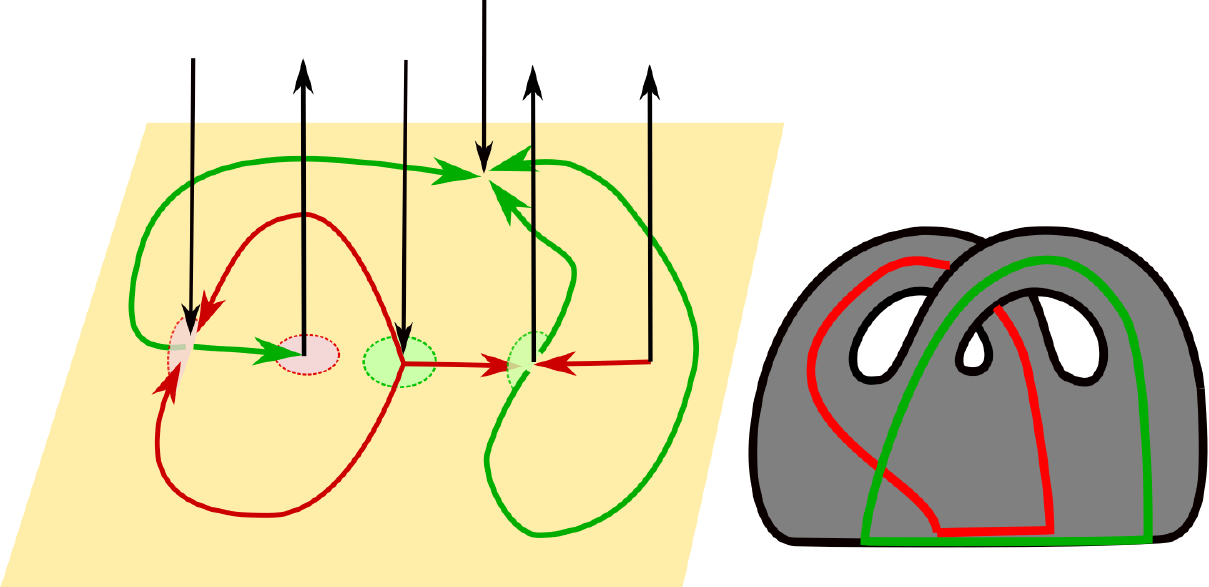}}
\caption{Two flows on $D^3$ of the second type and their Pr-diagrams}
\label{pic13b}
\end{figure}

Taking into account the restriction on the sum of indexes, there are only 14 possibilities to locally continue this flow, one of which cannot be implemented globally (see Fig. 6). Each other can be implemented uniquely, so we have 13 flows in this case.

\begin{figure}[ht]
\center{\includegraphics[height=2.5cm]{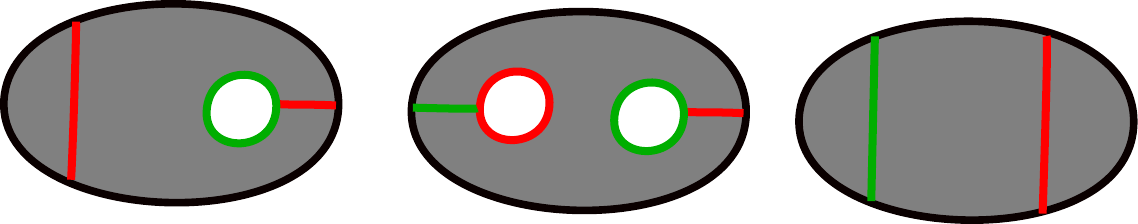}}
\caption{Three diagrams of the second type}
\label{pic13a}
\end{figure}

Three more of the Pr-diagrams of these flows are shown in fig. \ref{pic13a}. Two more Pr-diagrams can be obtained from the Pr-diagrams in fig. \ref{pic13b}, if the red coloring / no border coloring is changed to the opposite. In addition, three more Pr-diagrams can be obtained from the first two Pr-diagrams of fig. \ref{pic13a}, if the following operations are allowed: 1) simultaneously change the red and green colors of all arcs and chords, 2) change the red color of the border (painted - not painted).

\begin{figure}[ht]
\center{\includegraphics[height=5cm]{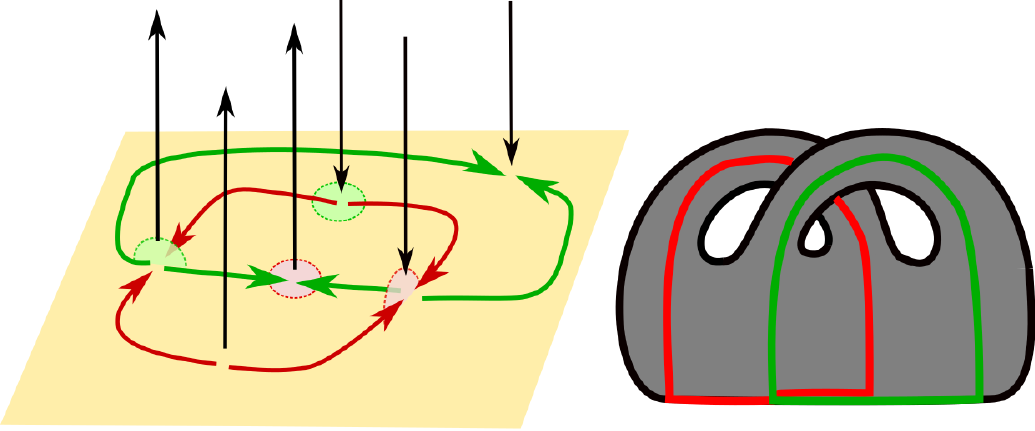}}
\caption{A flow on $D^3$ of the third type and its diagram}
\label{pic13c}
\end{figure}

Next, we will consider the third type of flows on the sphere. This flow can be represented on the unit sphere so that it is symmetric with respect to the coordinate planes. Given the symmetry, such flows will be determined by the number of global sources and sinks. So, 4 situations are possible: 1) one source and one drain (Fig. \ref{pic13c}), 2) two sources and one drain (Fig. \ref{pic13d} on the left), 3) one source and two drains (Fig. \ref{pic13d} in the center), 4) two sources and two sinks (Fig. \ref{pic13d} on the right).

\begin{figure}[ht]
\center{\includegraphics[height=3cm]{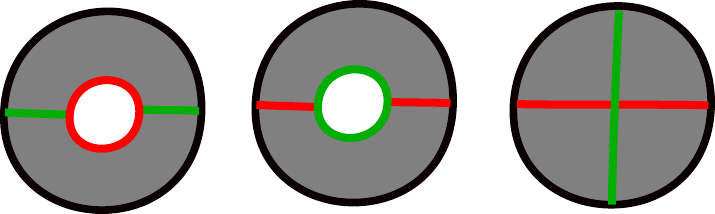}}
\caption{Three diagrams of the third type}
\label{pic13d}
\end{figure}

Summing up all options, we have 29 flows with 6 fixed points on the boundary of $D^3$.

\subsection{Gravitational flow of the Sun-Earth system}

In the Sun-Earth system, we consider points moving around the Sun (more precisely, the center O of the masses of the Earth and the Sun) with the same angular velocity as the Earth. Three forces act on the points -- gravity towards the Sun, gravity towards the Earth and centrifugal force. The first two forces are inversely proportional to the modulus of the distance to the corresponding bodies, and the third is proportional to the distance to O. Then the centers of the Earth and the Sun are drains. There are 3 more fixed points on the Sun-Earth line (Lagrange points $L_1, L_2, L_3$), which are saddles in the rotation plane S. In addition, in this plane there are two more fixed points ($L_4,L_5$) -- sources.

\begin{figure}[ht]
\center{\includegraphics[height=7.5cm]{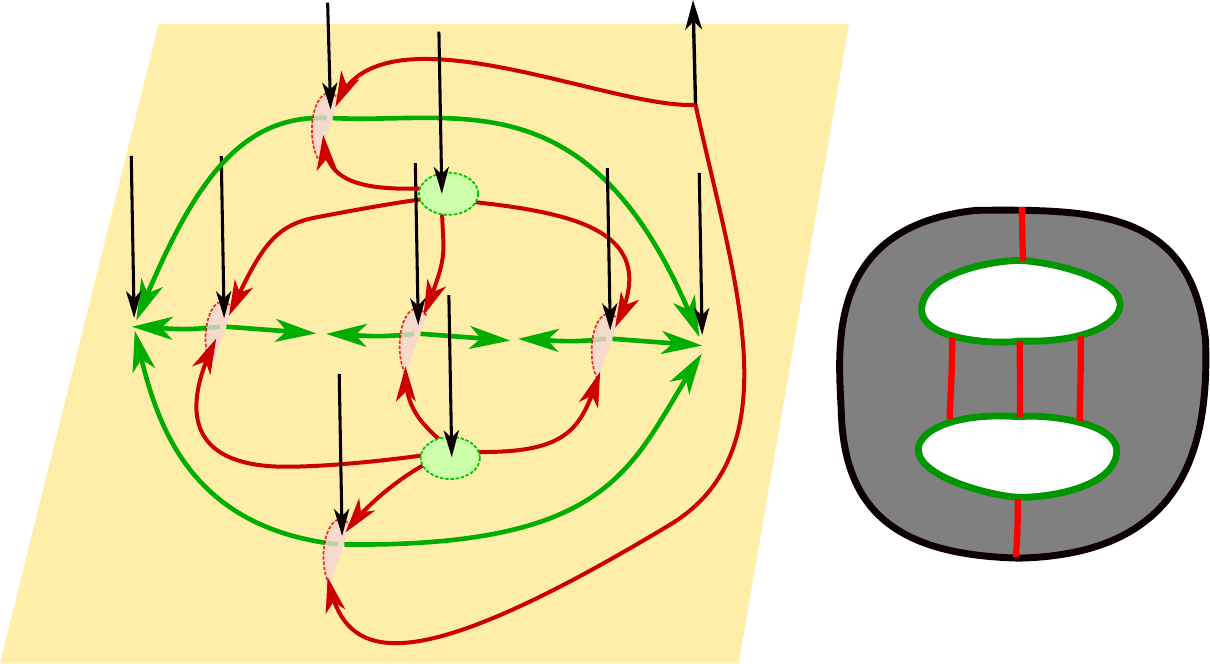}}
\caption{Flow of the Sun--Earth system on $D^3$}
\label{pic14}
\end{figure}

The rotation of the $S$ plane uniquely generates a rotation of the three-dimensional space around an axis perpendicular to $S$ passing through O. Since the system is symmetric with respect to this plane, it is described by a flow in one of the two subspaces, which we will call the upper one. Consider a hemisphere centered at O, with a sufficiently large radius that all fixed points lie inside it in union with a flat disk of the same radius. We will smooth this union at the intersection, and at the points of the resulting surface we will project the field onto the tangent plane and smooth it. This process corresponds to the fact that at sufficiently large distances we are not interested in the distance to the point, but only in its placement on the telescope screen, and smoothing at the intersection points is the movement of the observer to the inner point of the upper subspace.

At the same time, there will be a spring at the pole in the upper hemisphere, and two more drains and two saddles will be added at the equator. This flow and its diagram are shown in fig. \ref{pic14}. On the Pr-diagram, the green curves correspond to the points $L_4$ and $L_5$, the three red curves between them correspond to the points $L_1,L_2,L_3$, and the curvilinear quadrilaterals bounded by them correspond to the Sun and the Earth.

\section{Optimal Morse flows on a body with handles.}
A Morse flow with fixed points on the boundary of a body with handles $M$ will be called optimal if it has the minimum number of singularities and saddle connections among all such flows on $M$.

It follows from the definition that the optimal flow has no fewer fixed points than the optimal flow on the boundary. In addition, the Pr diagram of the optimal flow has the least number of intersection points between the red and green meridians.

\begin{theorem} On bodies with $g$ handles, the Morse flow is optimal if and only if it has one point of type 1 and one of type 6 each, and $g$ curves $u$ and $v$ (fixed points 3 and 4 types) and without intersection points between them in the flow diagram.
\end{theorem}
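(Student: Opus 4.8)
The plan is to reduce the statement to two Poincaré--Hopf identities among the numbers $n_1,\dots,n_6$ of fixed points of types $1,\dots,6$, deduce a sharp lower bound on the number of singularities, and then invoke the Realization Theorem to show that the minimum is attained with no saddle connections. Writing $N=\sum_i n_i$, I would first exploit the restriction of the flow to $\partial M$. Since $M$ is a genus-$g$ handlebody, $\partial M$ is a closed orientable surface with $\chi(\partial M)=2-2g$, and a boundary fixed point of index $(p,q)$ is a source, saddle, or sink of the boundary flow according as $p=0,1,2$; hence types $1,2$ are sources, $3,4$ are saddles, and $5,6$ are sinks, so Poincaré--Hopf on $\partial M$ gives $(n_1+n_2)-(n_3+n_4)+(n_5+n_6)=2-2g$. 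For the second identity I would double the manifold: the doubled flow on $DM$ is smooth near each former boundary point (the standard model $(\pm x,\pm y,\pm z)$ extends across $\{z=0\}$ by reflection), a type-$(p,q)$ point has stable manifold of dimension $p+q$ in $DM$ and hence index $(-1)^{p+q}$, and since $\chi(DM)=0$ we obtain $n_1-n_2-n_3+n_4+n_5-n_6=0$.

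Combining these, $N=2\bigl[(n_1+n_2)+(n_5+n_6)\bigr]+2g-2$. Every Morse flow has a source (type $1$) and a sink (type $6$), so $n_1\ge 1$ and $n_6\ge 1$, whence $(n_1+n_2)+(n_5+n_6)\ge 2$ and therefore $N\ge 2+2g$. Equality forces $n_1=n_6=1$ and $n_2=n_5=0$; the first identity then gives $n_3+n_4=2g$ and the second gives $n_4=n_3$, so $n_3=n_4=g$. Thus any flow attaining the minimal number of singularities has exactly one point each of types $1$ and $6$, none of types $2$ or $5$, and $g$ points each of types $3$ and $4$, i.e. $g$ arcs $u$, $g$ arcs $v$, and no $U$- or $V$-cycles.

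It remains to treat saddle connections. A point of $u_i\cap v_j$ lies simultaneously in the $2$-dimensional unstable manifold of a type-$3$ saddle and the $2$-dimensional stable manifold of a type-$4$ saddle, so it is exactly a saddle connection; as $n_2=n_5=0$ there are no $U$- or $V$-cycles producing further red--green intersections. Hence the number of saddle connections equals the number of points of $\bigl(\bigcup_i u_i\bigr)\cap\bigl(\bigcup_j v_j\bigr)$, and minimizing it means making the $u$- and $v$-systems disjoint. To finish I would exhibit such a diagram and apply the Realization Theorem: starting from the trivial source--sink flow on $D^3$ (whose Pr-diagram is a disk with no curves) and attaching $g$ independent $1$-handles, each carrying one type-$3$ and one type-$4$ saddle as in the full-torus model of Figure~\ref{pic9}, produces on the genus-$g$ handlebody a flow whose $u$- and $v$-arcs are pairwise disjoint. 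One checks that the resulting quintet $(F,u,U,v,V)$ satisfies properties 1--5 (property 5, with no $U$- or $V$-cycles, just asserts that the $g$ arcs $u_i$ cut $F$ into one disk, and likewise for the $v_j$), so by the Realization Theorem it is the Pr-diagram of a Morse flow attaining $N=2+2g$ with zero saddle connections. This settles both directions: a flow with the stated data meets both minima and is optimal, and conversely an optimal flow achieves $N=2+2g$, which forces the type counts above, and achieves zero saddle connections, which forces disjoint $u$- and $v$-systems.

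The routine part is the counting; the genuine obstacle is the realization step, specifically verifying that the disjoint cut systems $\{u_i\}$ and $\{v_j\}$ reconstruct the given handlebody $M$ rather than some other $3$-manifold with the same boundary pattern --- the Heegaard-type consistency condition discussed in the subsection on continuation of the flow into the interior. I expect to handle this by the explicit handle-by-handle construction above, where each attached $1$-handle is modeled on the full torus and contributes a disjoint pair of arcs, so that consistency holds by construction and induction on $g$. The tempting shortcut of arguing abstractly that two complete arc systems on $F$ can always be isotoped apart is false in general and must be avoided, which is exactly why the concrete model is needed.
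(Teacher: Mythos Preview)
Your proposal is correct but takes a different route from the paper for the lower bound. The paper argues directly from the m-handle decomposition: every Morse flow has a source and a sink, and since only handles of type~3 raise the genus of the body there must be at least $g$ of them; reversing the flow gives at least $g$ type-4 points as well, so $N\ge 2+2g$. Existence is then shown by exhibiting the explicit Pr-diagram of a disk with $g$ holes, each joined to the outer boundary by a parallel red/green pair. Your approach via the two Poincar\'e--Hopf identities (on $\partial M$ and on the double $DM$) is more systematic: it yields the full constraints $n_1=n_6=1$, $n_2=n_5=0$, $n_3=n_4=g$ in one stroke from equality in $N\ge 2+2g$, it makes explicit the identification of saddle connections with $u\cap v$ points (which the paper does not discuss separately), and it flags the genuine consistency issue in the realization step that the paper passes over silently. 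The paper's argument is shorter and more geometric but leans on the m-handle machinery developed earlier; yours is self-contained index theory and would transfer more readily to other manifolds with boundary.
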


\begin{proof}
\textit{Sufficiency.} Since every Morse stream has a source and a drain, it has points of type 1 and 6. Since only the gluing of handles of the third type increases the genus of the surface (body), then the body with $g$ handles must have at least $g$ points of the third type (curves $u$). If we reverse the direction of movement along the flow, then by the same reasoning we will obtain that there are no fewer than $g$ points of the fourth type.
\textit{Necessity.} Let us show that there exists a flow that satisfies the conditions of the theorem. For this, we will show its diagram. It is a 2-disc with $g$ holes, each hole connected to the boundary of the disk by a pair of parallel curves, one red and the other green. For $g=2$ see Fig. \ref{pic14b} first chart on the left.
\end{proof}

Since optimal flows on connected closed oriented surfaces can be determined using chord diagrams, we will show how they can be used to find flows on a body with handles.

If we cut the $F$ surface of the Pr-diagram along the red curves, we get a 2-disc with green chords. We connect the middle of the red sides with a red chord, if it belongs to the same red curve. The resulting colored chord diagram is a complete topological invariant of the flow.

Therefore, a colored (with red and green chords) chord diagram will be an optimal Morse flow chord diagram if it is 1-cycle, the number of red and green chords is equal, and the green chords have no intersections.

Note that if we omit coloring on the colored chord diagram (all chords are colored in one color), then we get a chord diagram of the limit flow. Therefore, in order to find the diagrams of all optimal flows on a body of the genus $g$, it is necessary to select $g$ non-intersecting chords from the diagrams with 2$g$ chords defining the optimal flow on the surface, and color them in green , and the rest of the chord in red.

\subsection{Optimal flows on a complete torus}
There is a single optimal flow on the surface of the torus and its chord diagram has two intersecting chords. By coloring one of the chords in green and the other in red, we will get a colored chord diagram of the flow on a complete torus (body with one handle). Therefore, there is a single optimal flow on the complete torus. Its Pr-diagram was constructed by us earlier in fig. \ref{pic9}.

\subsection{Optimal flows on a body with two handles}

There are 4 topologically inequivalent flows on a closed oriented surface of genus 2 (see, for example, \cite{Kibalko18}). If all the chords intersect at one point (the center of the circle), then we will not be able to choose two non-intersecting chords from them. For the remaining three diagrams, possible colorings are shown in fig.\ref{pic14a}.

\begin{figure}[ht]
\center{\includegraphics[height=2.5cm]{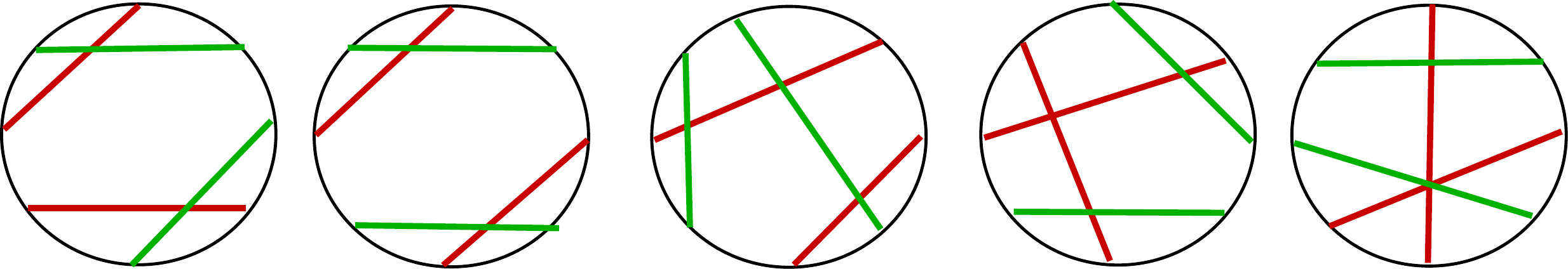}}
\caption{Colored chord diagrams of Morse streams of kind 2 }
\label{pic14a}
\end{figure}

The corresponding Pr flow diagrams are shown in Fig.\ref{pic14b}.

\begin{figure}[ht]
\center{\includegraphics[height=3.5cm]{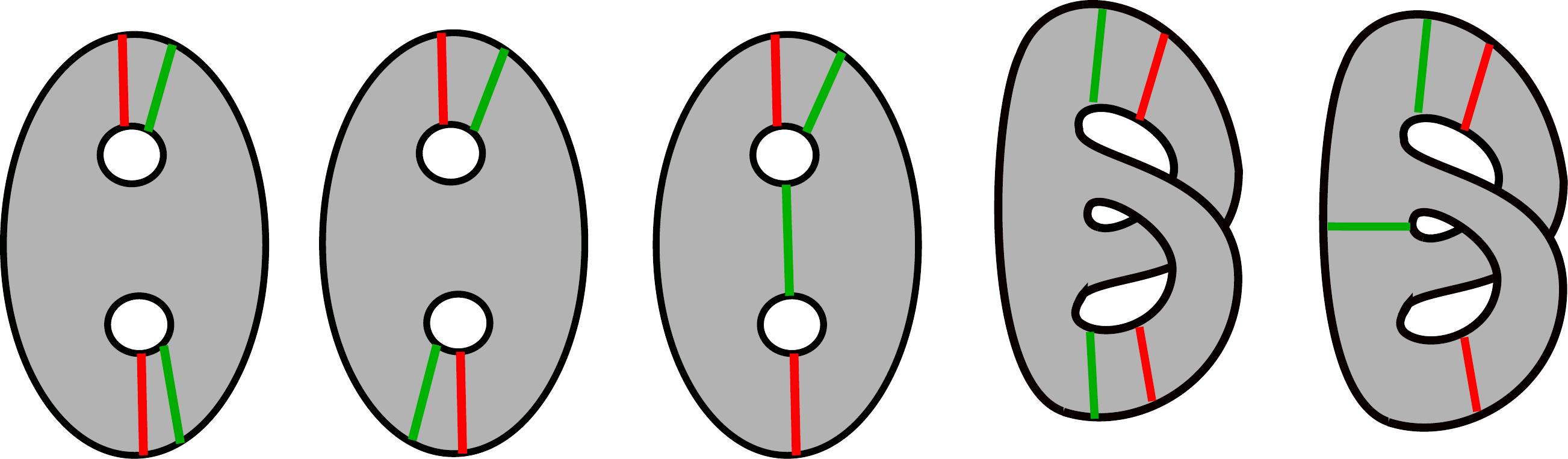}}
\caption{Pr-diagrams of Morse flows on a body with two handles}
\label{pic14b}
\end{figure}


Therefore, there are five different structures of optimal flows on a body with two handles.

\subsection{Optimal flows on a body with three handles}

Fig.\ref{pic14c} shows the 82 chord diagrams that define the optimal flows on an oriented surface of genus 3, and also indicates the number of ways in which three non-intersecting chords can be chosen.
Therefore, there are 177 different structures of optimal flows on a body with three handles.

\newpage

\begin{figure}[ht]

%
\center{\includegraphics[height=17.0cm]{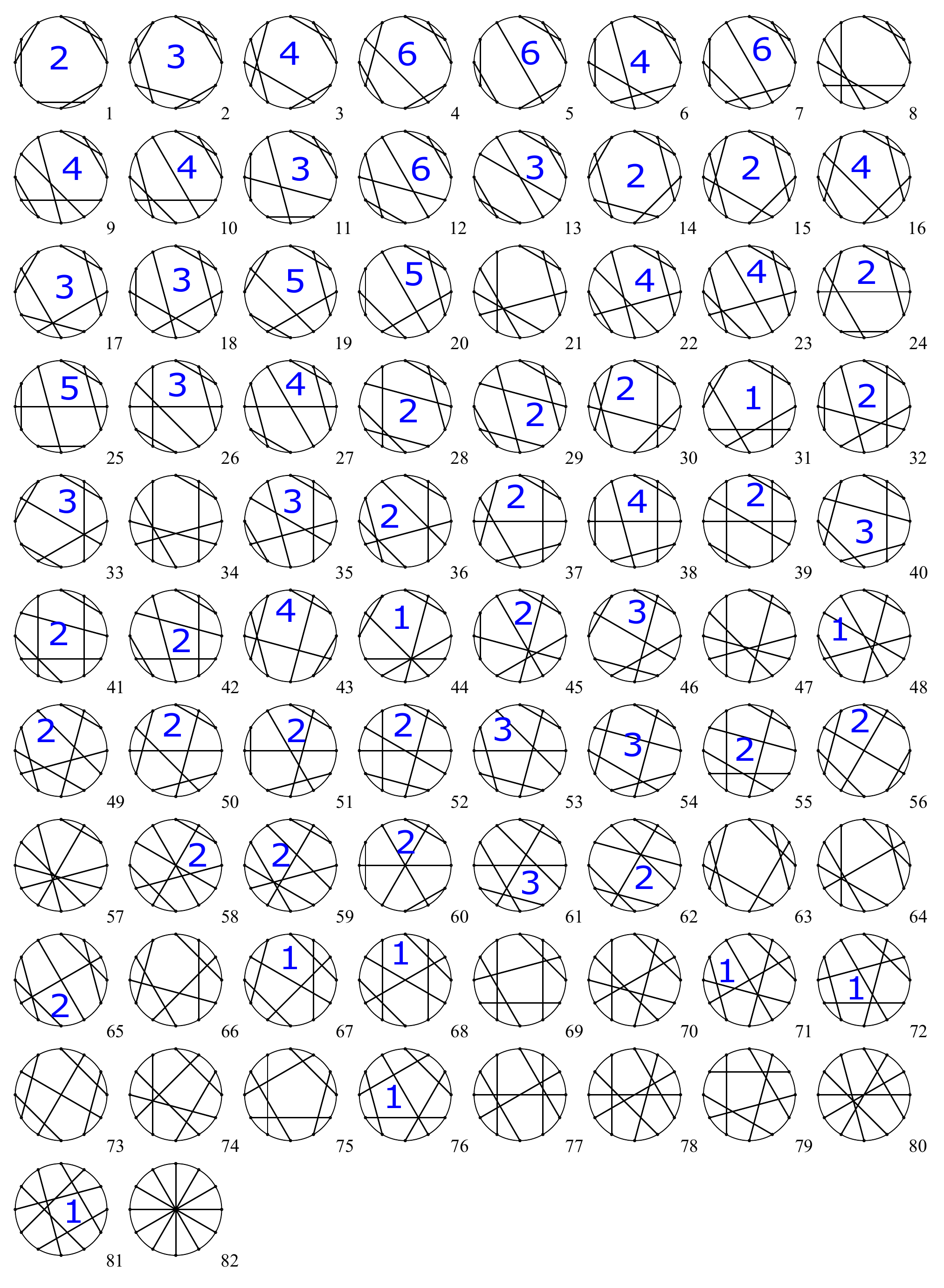}}
\caption{Optimal Morse flows on a body with handles of genus 3}
\label{pic14c}
\end{figure}

\newpage

\subsection{Water flows on a river with islands}

On the river, the flow has zero speed at points on the bank and bottom, but if we observe a body in water of a certain diameter, then we will be interested only in points that do not fall into some small area of the bottom. Therefore, we will discard this circle and smooth the result on the boundary. We project the velocity vector on the resulting boundary surface orthogonally onto the boundary and smooth the vector field around the boundary. Note that the regular trajectories of the border along the bottom approach the shore, and on the surface of the river, they go to the center. If there are $n$ islands on the river, then the flow will be in a body with $n$ handles.

An optimal flow on a body with $n$ handles will be called a river flow if it has the following properties: 1) the boundary surface can be cut into two homeomorphic surfaces, which we will call the upper and lower surfaces, so that all fixed points lie on the intersection of these two surfaces .

\begin{theorem}
A colored chord diagram is a river flow diagram if 1) it has $n$ chords of each color, 2) chords of the same color do not cross each other, 3) on the red chords, you can select one of the ends (upper ends) and arrange the chords as follows, that these ends be consecutive points of the chord diagram (there are no other chord ends, neither red nor green, between them).
\end{theorem}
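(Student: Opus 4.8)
The plan is to exploit the dictionary between colored chord diagrams and optimal flows set up in the preceding subsections. Conditions 1) and 2) are, in effect, a restatement of the fact that the diagram defines an optimal flow on a body of genus $n$: by the earlier characterization of optimal chord diagrams together with the Implementation Theorem, having $n$ green and $n$ red chords that are pairwise non-crossing within each color guarantees that the reconstructed quintet $(F,u,U,v,V)$ satisfies properties 1)--5) of the topological-properties theorem and is realized by an optimal flow $X$ with one source, one sink, $n$ saddles of type 3 and $n$ of type 4 (so $2n+2$ fixed points in all, with index sum $2-2n$). Hence the whole content of the statement is that condition 3) is what produces the upper/lower splitting defining a river flow.

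I would write the sought decomposition as $\partial M = P_+ \cup P_-$ with common boundary $C = P_+ \cap P_-$ a disjoint union of circles carrying all fixed points, and $P_+ \cong P_-$. Since a disc with $n$ holes, doubled along its $n+1$ boundary circles, is exactly the closed surface of genus $n$, the natural candidate, matching the physical river, is $P_+ \cong P_- \cong$ a disc with $n$ holes and $C$ a system of $n+1$ circles. So I would first run the restoration procedure of Section 2.4 to recover the boundary flow on $\partial M$, recording the fixed points and the way the green curves $u_i$ and red curves $v_i$ meet $\partial M$; the target is then to produce $C$ and to check that its two sides are planar of this common type.

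For sufficiency I would build $C$ from condition 3). Choosing the upper end of each red chord and using that these $n$ points are consecutive on the boundary of the chord diagram, I select a boundary arc $A$ meeting the chord ends only in these points; transporting $A$ and the red chords back through the cut that produced the diagram gives a curve on $\partial M$ through all type-4 saddles, which, closed up with the arcs determined by the green chords and with the unique source and sink, forms the $n+1$ disjoint circles $C$ meeting every fixed point. I would then check that $C$ separates $\partial M$ and that the doubling involution fixing $C$ and interchanging $P_+$ with $P_-$ preserves the flow: it interchanges the two branches of each $v_i$ across $C$, equivalently the two red ends of each red chord, so the two sides are two copies of one planar surface, presenting $X$ as a double across $C$, i.e. as a river flow. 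For the converse, starting from a river decomposition one labels, for each type-4 saddle on $C$, the red-chord end lying on the $P_+$-side as its upper end; connectedness of the single half $P_+$ attached along $C$ forces these upper ends to occur without interleaving with other chord ends, which is condition 3), while 1) and 2) again follow from optimality.

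The step I expect to be the main obstacle is the homeomorphism $P_+ \cong P_-$ in the sufficiency direction: turning the purely combinatorial ``consecutive upper ends'' hypothesis into a genuine separating curve whose two sides are of the same topological type, and verifying that the reconstructed flow is invariant under the reflection across $C$. This requires careful bookkeeping of how the arcs of $\partial F$, the green chords and the red sides reassemble on $\partial M$, together with control of where the unique source and sink sit relative to $C$. The non-crossing and consecutiveness hypotheses are precisely what rule out the obstructed configurations (such as the all-chords-through-one-point diagram already excluded in the genus-$2$ discussion), in which no symmetric separating curve exists.
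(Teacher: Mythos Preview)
Your proposal is correct in approach and in fact more complete than the paper's own argument. The paper's proof is a three-sentence sketch that treats only the necessity direction: assuming the flow is already a river flow, it interprets the upper ends of the red chords as the separatrices running from the source to the islands along the upper surface, and the remaining ends as those running along the bottom, so that the upper ends are automatically separated from all other chord ends (condition 3). It then observes that the chords pair off, each red chord with the green chord crossing it nearest the upper end. Conditions 1) and 2) are not argued; sufficiency is not addressed at all.

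By contrast, you treat both directions and locate the real work in sufficiency: building the separating system $C$ out of the consecutive-upper-ends hypothesis and checking $P_+\cong P_-$ with the reflection symmetry of the reconstructed flow. That is the substantive step, and the paper simply omits it. Your derivation of 1) and 2) from the earlier characterization of optimal chord diagrams is the right justification; the paper takes this for granted. So your plan is a genuine strengthening: the paper offers the geometric dictionary (upper end $\leftrightarrow$ upper-surface separatrix) as the whole proof, whereas you use that dictionary but also carry out the construction of $C$ and the verification of the doubling that the paper leaves implicit.
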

\begin{proof}
The upper ends correspond to separatrixes that go from the source to the islands along the (upper) surface of the river. Other separatrices coming from the source go along the bottom (lower surface), so their corresponding points on the chord diagram are separated from the highlighted ends. The ends that are not upper are called lower. Then all the chords are divided into pairs: one pair includes a red chord and a green one, which crosses it at the point closest to the upper end.
\end{proof}
If we consider water flows on a river with two islands, then among the five diagrams in fig.\ref{pic14b}, only the first and third diagrams are possible for them (islands next to each other and islands one above the other, respectively).

For flows with three islands, among the 82 diagrams in Fig.\ref{pic14c}, only chord diagrams 5, 7, 11, 12, 19, 20, 38, 42 are possible.

\section*{Conclusions}
The complete topological invariant of a Morse flow on oriented 3-manifolds with a boundary, the Pr-flow diagram, which we have constructed, generalizes Heegaard diagrams for closed 3-manifolds.  Flow structures, which were founded with its help on the 3-disc and bodies with handles, prove its effectiveness. It would also be interesting to investigate the following problems:

1) what be the diagrams of optimal flows on other three-dimensional manifolds (for example, a complement to the circle of a node in a three-dimensional sphere; manifolds whose boundary is a sphere),

2) construct a topological invariant of flows on unoriented 3-manifolds,

3) generalize these results to Morse-Smale flows with closed orbits,

4) when there is a continuation of the Morse flow from the surface to the interior of the 3-manifold without internal fixed points,

5) investigate flows on manifolds with angles,

6) investigate flows in which, together with fixed points on the boundary, there are internal fixed points.






\end{document}